\def\disp{\displaystyle}
\def\dref#1{(\ref{#1})}
\theoremstyle{plain}
\newtheorem{theorem}{Theorem}[section]
\newtheorem{lemma}{Lemma}[section]
\newtheorem{proposition}{Proposition}[section]
\theoremstyle{definition}
\newtheorem{definition}{Definition}[section]
\newtheorem{remark}{Remark}[section]
\numberwithin{equation}{section}
\begin{document}

\title{\bf An optimal  result  for global existence and boundedness in a
three-dimensional
 Keller-Segel-Stokes system   with  nonlinear diffusion
}

\author{
Jiashan Zheng\thanks{Corresponding author.   E-mail address:
 zhengjiashan2008@163.com (J.Zheng)}
 \\
    School of Mathematics and Statistics Science,\\
     Ludong University, Yantai 264025,  P.R.China \\
}
\date{}


\maketitle \vspace{0.3cm}
\noindent
\begin{abstract}
This paper investigates the following  Keller-Segel-Stokes system with  nonlinear diffusion
$$
 \left\{
 \begin{array}{l}
   n_t+u\cdot\nabla n=\Delta n^m-\nabla\cdot(n\nabla c),\quad
x\in \Omega, t>0,\\
    c_t+u\cdot\nabla c=\Delta c-c+n,\quad
x\in \Omega, t>0,\\
u_t+\nabla P=\Delta u+n\nabla \phi,\quad
x\in \Omega, t>0,\\
\nabla\cdot u=0,\quad
x\in \Omega, t>0\\
 \end{array}\right.\eqno(KSF)
 $$
 under homogeneous boundary conditions of Neumann type for $n$ and $c$, and of Dirichlet type for $u$ in
a  three-dimensional bounded domains $\Omega\subseteq \mathbb{R}^3$ with smooth boundary, where 
$\phi\in W^{1,\infty}(\Omega),m>0$.
It is proved that if $m>\frac{4}{3}$,
  then for any sufficiently regular nonnegative
initial data there exists at least one global boundedness  solution  for system  $(KSF)$, which in view of the known
results for the fluid-free system  mentioned below (see Introduction)  is an {\bf optimal} restriction on $m$.

\end{abstract}

\vspace{0.3cm}
\noindent {\bf\em Key words:}~Boundedness;
Stokes system; Keller-Segel model;
Global existence; Nonlinear diffusion

\noindent {\bf\em 2010 Mathematics Subject Classification}:~ 35K55, 35Q92, 35Q35, 92C17

\newpage
\section{Introduction}
Many phenomena, which appear in natural science, especially, biology and
physics, support animals' lives (see \cite{Liggh00,Xu5566r793,Wangsseeess21215,Guggg1215}).
Chemotaxis is the biological phenomenon of oriented movement of
cells under influence of a chemical signal substance (see Keller and Segel  \cite{Keller2710}).
A classical mathematical model for this type of processes was proposed by Keller and Segel in
\cite{Keller2710} as follows:
\begin{equation}
 \left\{\begin{array}{ll}
 n_t=\Delta n-\chi\nabla\cdot(n\nabla c),
 \quad
x\in \Omega,~ t>0,\\
 \disp{ c_t=\Delta c- c +n}\quad
x\in \Omega, ~t>0,\\
 \end{array}\right.\label{7223ffggddff44101.2x16677}
\end{equation}
where $\chi > 0$ is called chemotactic sensitivity, $n$ and $c$ denote the density of the cell population and the
concentration of the attracting chemical substance, respectively.
Starting from the pioneering work of Keller and Segel (see Keller and Segel \cite{Keller2710}),
an extensive mathematical literature has
grown on the Keller-Segel model and its variants (see e.g.  \cite{Bellomo1216,Hillen334,Horstmann791,Horstmann2710}).
To prevent any chemotactic collapse in \dref{7223ffggddff44101.2x16677}, the following
variant has also been widely investigated
\begin{equation}
 \left\{\begin{array}{ll}
 n_t=\Delta n^m-\chi\nabla\cdot( n\nabla c),\\
 \disp{ c_t=\Delta c- c +n.}
 \end{array}\right.\label{7223ddfffghhf44101.2x16677}
\end{equation}
 The main issue of the investigation was whether the
solutions of the models are bounded or blow-up. In fact, 
all
solutions are global and uniformly bounded if $m>2-\frac{2}{N}$ (see Tao and Winkler \cite{Tao794,Winkler79}), whereas if $m<2-\frac{2}{N}$, \dref{7223ddfffghhf44101.2x16677} possess some solutions which blow up in finite time (see  Winkler et. al. \cite{Cie791,Winkler79}).
 Therefore,
\begin{equation}m=2-\frac{2}{N}
\label{722dff344101.ddff2ffddfffggffggx16677}
\end{equation}
is the critical blow-up exponent, which is related to the presence of a so-called volume-filling effect.
For a more detailed discussion on this issue and on a parabolic-elliptic version of Keller-Segel system and its variants
we refer readers to see  Winkler et al. \cite{Cie72,Winkler72,Winkler21215,Winkler79,Wangffgggss21215}, Zheng et al. \cite{Zhengssssdefr23,Zheng00,Zhengssdefr23,Zheng33312186,Zhengssdddssddddkkllssssssssdefr23,Zhengssssssdefr23}.

In various situations, however, the migration of bacteria is furthermore substantially affected by
changes in their environment (see \cite{Miller7gg6,Tuval1215}).
Since the bacteria
consume the chemical instead of producing it,
Tuval et al. (\cite{Tuval1215}) proposed the following (quasilinear) chemotaxis(-Navier)-Stokes system
 \begin{equation}
 \left\{\begin{array}{ll}
   n_t+u\cdot\nabla n=\Delta n^m-\nabla\cdot( nS(x,n,c)\nabla c),\quad
x\in \Omega, t>0,\\
    c_t+u\cdot\nabla c=\Delta c-nf(c),\quad
x\in \Omega, t>0,\\
u_t+\kappa (u\cdot\nabla)u+\nabla P=\Delta u+n\nabla \phi,\quad
x\in \Omega, t>0,\\
\nabla\cdot u=0,\quad
x\in \Omega, t>0\\
 \end{array}\right.\label{1.1hhjjddssggtyy}
\end{equation}
in a bounded domain
 $\Omega \subset \mathbb{R}^3$ with smooth boundary, where $f(c)$ is the consumption rate of the oxygen by the cells and
 $S(x, n, c)$ is a tensor-valued function or a scalar function which satisfies
 \begin{equation}\label{x1.73142vghf48gg}|S(x, n, c)|\leq C_S(1 + n)^{-\alpha} ~~~~\mbox{for all}~~ (x, n, c)\in\Omega\times [0,\infty)^2
 \end{equation}
with some $C_S > 0$ and $\alpha> 0$.
Here  
$n$ and $c$ are defined as before, 
 $u,P,\phi$ and $\kappa\in \mathbb{R}$ denote, respectively, the velocity field, the associated pressure of the fluid, the potential of the
 gravitational field and the
strength of nonlinear fluid convection. 
In recent years, approaches have been developed based on a natural energy functional, in the past several years
there have been numerous analytical approaches that addressed issues of the solvability result for system \dref{1.1hhjjddssggtyy} with  $S(x, n, c):=S(n)$ is a scalar function (see e.g.
Chae et al.  \cite{Chaexdd12176},
Duan et al. \cite{Duan12186},
Liu and Lorz  \cite{Liu1215,Lorz1215},
 Tao and Winkler   \cite{Tao41215,Winkler31215,Winkler61215,Winkler51215}, Zhang et. al.  \cite{Zhang12176,Zhangdddddff4556} and references therein).
 On the other hand,
for general $S$ is  a chemotactic sensitivity tensor, 
one can see Winkler (\cite{Winkler11215}) and Zheng (\cite{Zhengsddfff00})
 and the references therein for details.

 Concerning the framework where the chemical is produced by the cells instead of consumed, then corresponding  chemotaxis--fluid model
 is then the quasilinear  Keller-Segel-Stokes system of the form
 \begin{equation}
 \left\{\begin{array}{ll}
   n_t+u\cdot\nabla n=\Delta n^m-\nabla\cdot(nS(x,n,c)\nabla c),\quad
x\in \Omega, t>0,\\
    c_t+u\cdot\nabla c=\Delta c-c+n,\quad
x\in \Omega, t>0,\\
u_t+\kappa (u\cdot\nabla)u+\nabla P=\Delta u+n\nabla \phi,\quad
x\in \Omega, t>0,\\
\nabla\cdot u=0,\quad
x\in \Omega, t>0,\\
 \end{array}\right.\label{1ssxdcfvgb.1}
\end{equation}
 which
  describe
chemotaxis-fluid interaction in cases when the evolution of the chemoattractant is essentially
dominated by production through cells (\cite{Bellomo1216,Hillen}).
%

Compared to the classical Keller-Segel chemotaxis and chemotaxis(-Navier)-Stokes system \dref{1.1hhjjddssggtyy}, the mathematical analysis of Keller-Segel-Stokes system \dref{1ssxdcfvgb.1} has to cope with considerable additional challenges.
 (see Wang, Xiang et. al.  \cite{Peng55667,Wang21215,Wangss21215,Wangssddss21215}, Zheng \cite{Zhengsddfff00,Zhengssdddd00}).
Xiang et. al. (\cite{Liggghh793}) established the global existence and boundedness of
the 2D system \dref{1ssxdcfvgb.1} under the assumption of \dref{x1.73142vghf48gg} with $\alpha = 0$ for any $m > 1$.
In a three-dimensional setup involving linear diffusion ($m = 1$ in \dref{1ssxdcfvgb.1}) and tensor-valued sensitivity
$S$ satisfying \dref{x1.73142vghf48gg} global weak solutions have been shown to exists
for $\alpha > \frac{3}{7}$ (see \cite{LiuZhLiuLiuandddgddff4556}) and $\alpha > \frac{1}{3}$  (see \cite{Zhengssdddd00} and also \cite{Wangssddss21215}), respectively.
If the bacteria
diffuses in a porous medium ($m\neq1$) and sensitivity $S(x, n, c)\equiv 1$, the global weak solutions for \dref{1ssxdcfvgb.1} whenever $m > 2$ (\cite{Zhengsddfffsdddssddddkkllssssssssdefr23}), which most
probably is not optimal in the sense of $m+ \alpha > 2-\frac{2}{N}$ (\cite{Winkler79}).
For the more related works in this direction, we mention that a corresponding quasilinear
version or the logistic damping
has been deeply investigated by  Zheng
\cite{Zhengsddfff00,Zhengsdsd6}, Wang and Liu \cite{Liuddfffff}, Tao and Winkler \cite{Tao41215},
 Wang et. al.
\cite{Wang21215,Wangss21215}.

Motivated by the above works,
  the aim of the present paper is to
   study the following Keller-Segel-Stokes system   with  nonlinear diffusion
 \begin{equation}
 \left\{\begin{array}{ll}
   n_t+u\cdot\nabla n=\Delta n^m-\nabla\cdot(n\nabla c),\quad
x\in \Omega, t>0,\\
    c_t+u\cdot\nabla c=\Delta c-c+n,\quad
x\in \Omega, t>0,\\
u_t+\nabla P=\Delta u+n\nabla \phi,\quad
x\in \Omega, t>0,\\
\nabla\cdot u=0,\quad
x\in \Omega, t>0,\\
 \disp{\nabla n\cdot\nu=\nabla c\cdot\nu=0,u=0,}\quad
x\in \partial\Omega, t>0,\\
\disp{n(x,0)=n_0(x),c(x,0)=c_0(x),u(x,0)=u_0(x),}\quad
x\in \Omega.\\
 \end{array}\right.\label{1.1}
\end{equation}

This paper is organized as follows. In Section 2, we state the main results, give an approximate problem and some basic
properties.
%
%
In Section 3, we derive an upper bound for regularized problems of \dref{1.1} by using the Maximal Sobolev regularity and a Moser-type iteration.
Finally, in Section 4 we
prove our main results  by passage to the limit in the approximate problem via estimates from
Section 3.

\section{Preliminaries and  main results}

In this section, we give some notations and recall some basic facts which will be
frequently used throughout the paper.
To formulate the main result, let us suppose that
\begin{equation}
\phi\in W^{1,\infty}(\Omega)
\label{dd1.1fghyuisdakkkllljjjkk}
\end{equation}
 and the initial data
$(n_0, c_0, u_0)$ fulfills
\begin{equation}\label{ccvvx1.731426677gg}
\left\{
\begin{array}{ll}
\displaystyle{n_0\in C^\kappa(\bar{\Omega})~~\mbox{for certain}~~ \kappa > 0~~ \mbox{with}~~ n_0\geq0 ~~\mbox{in}~~\Omega},\\
\displaystyle{c_0\in W^{2,\infty}(\Omega)~~\mbox{with}~~c_0\geq0~~\mbox{in}~~\bar{\Omega},}\\
\displaystyle{u_0\in D(A^\gamma_{r})~~\mbox{for~~ some}~~\gamma\in ( \frac{3}{4}, 1)~~\mbox{and any}~~ {r}\in (1,\infty),}\\
\end{array}
\right.
\end{equation}
where $A_{r}$ denotes the Stokes operator with domain $D(A_{r}) := W^{2,{r}}(\Omega)\cap  W^{1,{r}}_0(\Omega)
\cap L^{r}_{\sigma}(\Omega)$,
and
$L^{r}_{\sigma}(\Omega) := \{\varphi\in  L^{r}(\Omega)|\nabla\cdot\varphi = 0\}$ for ${r}\in(1,\infty)$
 (\cite{Sohr}).

\begin{theorem}\label{theorem3}
Let $\Omega \subset \mathbb{R}^3$ be a bounded  domain with smooth boundary, with smooth boundary.
Suppose that the assumptions
 \dref{dd1.1fghyuisdakkkllljjjkk} and \dref{ccvvx1.731426677gg}
 hold.
 If 
\begin{equation}\label{x1.73142vghf48}m>\frac{4}{3},
\end{equation}
then 
 the problem \dref{1.1} possesses at least
one global weak solution $(n, c, u, P)$
 in the sense of Definition \ref{df1}. Moreover, this solution is bounded in
$\Omega\times(0,\infty)$ in the sense that
\begin{equation}
\|n(\cdot, t)\|_{L^\infty(\Omega)}+\|c(\cdot, t)\|_{W^{1,\infty}(\Omega)}+\| u(\cdot, t)\|_{W^{1,\infty}(\Omega)}\leq C~~ \mbox{for all}~~ t>0.
\label{1.163072xggttyyu}
\end{equation}
Furthermore, $c$ and $u$ are continuous in
$\bar{\Omega}\times[0,\infty)$ and
\begin{equation}
n\in C^0_{\omega-*}([0,\infty); L^\infty(\Omega)).
 \label{zjscz2.5297x96302222tt4455hyuhiigg}
 \end{equation}
\end{theorem}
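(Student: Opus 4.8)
The plan is to prove Theorem~\ref{theorem3} by a regularization-and-passage-to-limit argument. First I would introduce a family of approximate problems, replacing the degenerate diffusion $\Delta n^m$ by $\Delta(n+\varepsilon)^m$ (or equivalently by $\nabla\cdot((n_\varepsilon+\varepsilon)^{m-1}\nabla n_\varepsilon)$) and, if needed, regularizing the taxis term and the coupling $n\nabla\phi$ by a Yosida-type approximation $n_\varepsilon \mapsto (1+\varepsilon n_\varepsilon)^{-1}n_\varepsilon$, so that for each $\varepsilon\in(0,1)$ the resulting system admits a global smooth solution $(n_\varepsilon,c_\varepsilon,u_\varepsilon,P_\varepsilon)$ by standard parabolic/fixed-point theory (this is the content announced for Section~2). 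The goal is then to establish \emph{$\varepsilon$-independent} bounds strong enough to extract a limit satisfying \eqref{1.163072xggttyyu} and \eqref{zjscz2.5297x96302222tt4455hyuhiigg}.

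The heart of the argument, which the paper places in Section~3, is an a~priori $L^\infty$ bound on $n_\varepsilon$ via maximal Sobolev regularity combined with Moser-type iteration. I would proceed through a bootstrap hierarchy of estimates. Starting from the basic conserved quantities ($\int_\Omega n_\varepsilon = \int_\Omega n_0$, and the standard $L^2$-type energy inequality for $c_\varepsilon$ and the energy estimate for $u_\varepsilon$ coming from the Stokes structure and $\|\phi\|_{W^{1,\infty}}$), I would test the $n_\varepsilon$-equation with $n_\varepsilon^{p-1}$ to obtain, for suitable $p$, a differential inequality of the form $\frac{d}{dt}\int_\Omega n_\varepsilon^p + c\int_\Omega |\nabla n_\varepsilon^{(p+m-1)/2}|^2 \le C\int_\Omega n_\varepsilon^{p+1-?}\,|\nabla c_\varepsilon|^2 + \cdots$, and control the right-hand side by Gagliardo–Nirenberg interpolation together with maximal regularity for the $c_\varepsilon$-equation (so $\|\nabla c_\varepsilon\|_{L^q}$ is governed by $\|n_\varepsilon\|_{L^s}$ on appropriate parabolic spaces). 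The condition $m>\frac43$ is exactly what is needed to close this chain in three dimensions: it makes the diffusion strong enough that the interpolation exponents fall on the right side of criticality, first yielding a bound for $n_\varepsilon$ in $L^p$ for every finite $p$ (uniformly in $\varepsilon$), and from there, via $L^p$–$L^\infty$ smoothing for the heat semigroup applied to the $c_\varepsilon$-equation, a bound for $\nabla c_\varepsilon$ in $L^\infty$; feeding this back into the $n_\varepsilon$-equation and running a Moser iteration on the sequence of exponents $p_k\to\infty$ gives the uniform $L^\infty$ bound on $n_\varepsilon$. Once $n_\varepsilon$ is bounded in $L^\infty$, parabolic and Stokes regularity upgrade this to the uniform bounds on $\|c_\varepsilon\|_{W^{1,\infty}}$ and $\|u_\varepsilon\|_{W^{1,\infty}}$ asserted in \eqref{1.163072xggttyyu}.

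With these bounds in hand, the final step (Section~4) is compactness and passage to the limit. The uniform $L^\infty$ bounds, together with the $\varepsilon$-uniform space-time estimate $\int_0^T\int_\Omega |\nabla n_\varepsilon^{(m+1)/2}|^2 < \infty$ from the energy inequality, give enough regularity — via an Aubin–Lions argument using the equations to bound the time derivatives in a negative-order space — to extract a subsequence along which $n_\varepsilon\to n$, $c_\varepsilon\to c$, $u_\varepsilon\to u$ strongly in suitable $L^p_{loc}$ spaces and a.e., and $n_\varepsilon^m \rightharpoonup n^m$, $\nabla c_\varepsilon\rightharpoonup\nabla c$ weakly. Strong a.e. convergence of $n_\varepsilon$ identifies the nonlinear limits and lets one pass to the limit in every term of the weak formulation, so $(n,c,u,P)$ is a global weak solution in the sense of Definition~\ref{df1}; lower semicontinuity of norms transfers \eqref{1.163072xggttyyu} to the limit, the improved regularity of the (now nondegenerate in $c$ and $u$) second and third equations yields continuity of $c$ and $u$ on $\bar\Omega\times[0,\infty)$, and the $L^\infty$ bound together with the equation for $n$ gives the weak-$*$ continuity \eqref{zjscz2.5297x96302222tt4455hyuhiigg}.

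The main obstacle I expect is \emph{closing the iterative $L^p$ estimate at the borderline}: controlling the cross term $\int_\Omega n_\varepsilon^{p-1}\nabla n_\varepsilon\cdot\nabla c_\varepsilon$ requires simultaneously tracking the regularity of $c_\varepsilon$ (through maximal Sobolev regularity of the inhomogeneous heat equation with right-hand side $n_\varepsilon$) and that of $n_\varepsilon$, so the two estimates are coupled and must be balanced so that the Gagliardo–Nirenberg exponents leave a strictly-subcritical power of $\|\nabla n_\varepsilon^{(p+m-1)/2}\|_{L^2}$ that can be absorbed. Making the bookkeeping uniform in $p$ (so that the Moser iteration constants do not blow up) under the single hypothesis $m>\frac43$ is the delicate quantitative core of the proof; everything else is either standard parabolic regularity theory or routine compactness.
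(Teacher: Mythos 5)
Your proposal follows essentially the same route as the paper: regularize the diffusion to $\Delta(n_\varepsilon+\varepsilon)^m$, test the first equation with $(n_\varepsilon+\varepsilon)^{p-1}$, close the $L^p$ estimate by pairing Gagliardo--Nirenberg with maximal Sobolev regularity, bootstrap to $L^\infty$ by Moser iteration, and pass to the limit via Aubin--Lions. The one ingredient your sketch leaves implicit, and which is the actual crux of closing the base case at a single $p_0>\tfrac32$ under the hypothesis $m>\tfrac43$, is that the maximal-regularity estimate for the $c$-equation produces not only $n_\varepsilon^{p+1}$ but also $|u_\varepsilon\cdot\nabla c_\varepsilon|^{p+1}$ on the right, and controlling that term forces a second maximal-regularity estimate for the \emph{Stokes} system (bounding $\|Au_\varepsilon\|_{L^{p+1}}$ back in terms of $\|n_\varepsilon\|_{L^{p+1}}$), with the two interpolation exponents balanced by the specific choices $p=\tfrac{25}{16}$, $\theta=\tfrac87$ in the paper's Lemmas \ref{lemma45630223116}--\ref{lemma45566645630223}.
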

\begin{remark}
(i) From Theorem \ref{theorem3}, we conclude that  if
the exponent $m$ of nonlinear diffusion  is large than $\frac{4}{3}$, then
 model \dref{1.1} exists a global (weak) bounded solution,   which yields to the nonlinear diffusion term  benefits the global of solutions.

(ii) In comparison to the result
for the corresponding fluid-free system \cite{Cie201712791,Tao794,Winkler79}, it is easy to see that the restriction on $m$ here is
{\bf optimal}.


(iii) Obviously, $ 2>\frac{4}{3}$,  Theorem \ref{theorem3} seems to partly improve the results of Zheng (\cite{Zhengsddfffsdddssddddkkllssssssssdefr23}),  who showed the global weak existence of solutions for \dref{1ssxdcfvgb.1} in
 the cases $S(x, n, c)\equiv1$  with $m> 2$.

(iv) If $\alpha=0,$ then  $ \max\{2-2\alpha,\frac{3}{4}\}=2>\frac{4}{3}$, so that,  Theorem \ref{theorem3} also (partly) improve the results of Peng and  Xiang (\cite{Peng55667}),  who showed the global weak existence of solutions for \dref{1ssxdcfvgb.1} in
 the cases $S(x, n, c)$ satisfying \dref{x1.73142vghf48gg} with $m> \max\{2-2\alpha,\frac{3}{4}\}$.

 (v) We should pointed that the idea of this paper can be also solved with other types of models, e.g.
 an attraction-repulsion chemotaxis fluid
model with nonlinear diffusion  (see \cite{Zhengssdddddfssdddd00}).
\end{remark}

In order to construct solutions of \dref{1.1} through an appropriate approximation, we then need to consider
the approximate system
\begin{equation}
 \left\{\begin{array}{ll}
   n_{\varepsilon t}+u_{\varepsilon}\cdot\nabla n_{\varepsilon}=\Delta (n_{\varepsilon}+\varepsilon)^m-\nabla\cdot(n_{\varepsilon}\nabla c_{\varepsilon}),\quad
x\in \Omega, t>0,\\
    c_{\varepsilon t}+u_{\varepsilon}\cdot\nabla c_{\varepsilon}=\Delta c_{\varepsilon}-c_{\varepsilon}+n_{\varepsilon},\quad
x\in \Omega, t>0,\\
u_{\varepsilon t}+\nabla P_{\varepsilon}=\Delta u_{\varepsilon}+n_{\varepsilon}\nabla \phi,\quad
x\in \Omega, t>0,\\
\nabla\cdot u_{\varepsilon}=0,\quad
x\in \Omega, t>0,\\
 \disp{\nabla n_{\varepsilon}\cdot\nu=\nabla c_{\varepsilon}\cdot\nu=0,u_{\varepsilon}=0,\quad
x\in \partial\Omega, t>0,}\\
\disp{n_{\varepsilon}(x,0)=n_0(x),c_{\varepsilon}(x,0)=c_0(x),u_{\varepsilon}(x,0)=u_0(x)},\quad
x\in \Omega.\\
 \end{array}\right.\label{1.1fghyuisda}
\end{equation}


Next, we will provide some results which will be used later. To this end,  by an adaptation of well-established fixed point arguments, one can readily verify local existence theory for \dref{1.1fghyuisda}
(see \cite{Winkler11215}, Lemma 2.1 of \cite{Painter55677} and Lemma 2.1 of \cite{Winkler51215}).
%
%
\begin{lemma}\label{lemma70}
Assume
that $\varepsilon\in(0,1).$
%
Then there exist $T_{max}\in  (0,\infty]$ and
a classical solution $(n_\varepsilon, c_\varepsilon, u_\varepsilon, P_\varepsilon)$ of \dref{1.1fghyuisda} in
$\Omega\times(0, T_{max})$ such that
\begin{equation}
 \left\{\begin{array}{ll}
 n_\varepsilon\in C^0(\bar{\Omega}\times[0,T_{max}))\cap C^{2,1}(\bar{\Omega}\times(0,T_{max})),\\
  c_\varepsilon\in  C^0(\bar{\Omega}\times[0,T_{max}))\cap C^{2,1}(\bar{\Omega}\times(0,T_{max})),\\
  u_\varepsilon\in  C^0(\bar{\Omega}\times[0,T_{max}))\cap C^{2,1}(\bar{\Omega}\times(0,T_{max})),\\
  P_\varepsilon\in  C^{1,0}(\bar{\Omega}\times(0,T_{max})),\\
   \end{array}\right.\label{1.1ddfghyuisda}
\end{equation}
 classically solving \dref{1.1fghyuisda} in $\Omega\times[0,T_{max})$.
%
Moreover,  $n_\varepsilon$ and $c_\varepsilon$ are nonnegative in
$\Omega\times(0, T_{max})$, and
\begin{equation}
\|n_\varepsilon(\cdot, t)\|_{L^\infty(\Omega)}+\|c_\varepsilon(\cdot, t)\|_{W^{1,\infty}(\Omega)}+\|A^\gamma u_\varepsilon(\cdot, t)\|_{L^{2}(\Omega)}\rightarrow\infty~~ \mbox{as}~~ t\rightarrow T_{max},
\label{1.163072x}
\end{equation}
where $\gamma$ is given by \dref{ccvvx1.731426677gg}.
\end{lemma}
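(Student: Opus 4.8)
\noindent\emph{Plan of proof.} The plan is to treat \dref{1.1fghyuisda}, for each fixed $\varepsilon\in(0,1)$, as a quasilinear parabolic system coupled to the Stokes equations and to construct a maximal local classical solution by a Banach fixed point argument along the by-now standard lines of \cite{Winkler11215,Painter55677,Winkler51215}. The decisive structural feature introduced by the regularization is that the first equation becomes \emph{uniformly} parabolic: writing
$$\Delta(n_\varepsilon+\varepsilon)^m=\nabla\cdot\bigl(m(n_\varepsilon+\varepsilon)^{m-1}\nabla n_\varepsilon\bigr),$$
on any set on which $n_\varepsilon$ is nonnegative and bounded the diffusion coefficient $m(n_\varepsilon+\varepsilon)^{m-1}$ is bounded above and is bounded below by $m\varepsilon^{m-1}>0$, so that the degeneracy present for $\varepsilon=0$ disappears. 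All constants below are accordingly allowed to depend on $\varepsilon$.

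First I would fix $R>0$ and a small $T\in(0,1)$ and set up the fixed point scheme on a closed ball of
$$X_T:=C^0\bigl(\bar\Omega\times[0,T]\bigr)\times C^0\bigl([0,T];W^{1,\infty}(\Omega)\bigr)\times C^0\bigl([0,T];D(A^\gamma)\bigr),$$
with $\gamma\in(\tfrac34,1)$ and $r\in(1,\infty)$ large as permitted by \dref{ccvvx1.731426677gg}, defining $\Phi(\tilde n,\tilde c,\tilde u):=(n,c,u)$ as the solution of the linear problems obtained by freezing the nonlinearities. Here the transport terms are first written in divergence form via $\nabla\cdot\tilde u=0$, e.g. $\tilde u\cdot\nabla\tilde n=\nabla\cdot(\tilde n\tilde u)$, so that the $n$-equation becomes a linear divergence-form equation with the Hölder-continuous coefficient $m(\tilde n+\varepsilon)^{m-1}$ and right-hand side $-\nabla\cdot(\tilde n\nabla\tilde c+\tilde n\tilde u)$, the $c$-equation reads $c_t-\Delta c+c=\tilde n-\nabla\cdot(\tilde c\tilde u)$ under Neumann conditions, and $u$ is given by $u(t)=e^{-tA}u_0+\int_0^t e^{-(t-s)A}\mathcal{P}\bigl(\tilde n\nabla\phi\bigr)\,ds$. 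Using $L^p$ maximal regularity and the smoothing estimates of the Neumann heat semigroup (in particular $\|e^{t\Delta}\nabla\cdot f\|_{L^\infty}\le C t^{-\frac12-\frac{3}{2r}}\|f\|_{L^r}$, which is time-integrable near $t=0$ for $r>3$) together with the fractional-power bounds $\|A^\gamma e^{-tA}v\|_{L^r}\le Ct^{-\gamma}\|v\|_{L^r}$, the continuity of $\mathcal{P}$ on $L^r$ (recall $\nabla\phi\in L^\infty\subset L^r$ by \dref{dd1.1fghyuisdakkkllljjjkk}) and the embedding $D(A^\gamma)\hookrightarrow W^{1,\infty}(\Omega)$ (valid for $\gamma>\tfrac34$ and $r$ large), one verifies that $\Phi$ leaves a suitable ball invariant and is a contraction once $R$ is chosen in terms of $\|n_0\|_{C^0}$, $\|c_0\|_{W^{1,\infty}}$, $\|u_0\|_{D(A^\gamma)}$ and $T=T(\varepsilon,R)$ is small enough. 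Banach's fixed point theorem then yields a mild solution on a maximal interval $[0,T_{max})$, and the standard continuation argument produces the extensibility alternative \dref{1.163072x}.

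It remains to upgrade regularity and fix the sign. Once the mild solution is at hand, the forcing terms $\nabla\cdot(n_\varepsilon\nabla c_\varepsilon)$, $\nabla\cdot(n_\varepsilon u_\varepsilon)$, $n_\varepsilon$ and $n_\varepsilon\nabla\phi$ in the three equations are Hölder continuous in $(x,t)$ on $\Omega\times(0,T_{max})$, so parabolic Schauder theory applied to the $n$- and $c$-equations, and the analogous statement for the Stokes system with Hölder forcing, give $n_\varepsilon,c_\varepsilon,u_\varepsilon\in C^{2,1}(\bar\Omega\times(0,T_{max}))$ and $P_\varepsilon\in C^{1,0}(\bar\Omega\times(0,T_{max}))$, i.e. \dref{1.1ddfghyuisda}, and these classically solve \dref{1.1fghyuisda}. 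Nonnegativity of $c_\varepsilon$ then follows from the parabolic maximum principle, since $c_\varepsilon$ satisfies $c_{\varepsilon t}-\Delta c_\varepsilon+c_\varepsilon+u_\varepsilon\cdot\nabla c_\varepsilon=n_\varepsilon$ with $c_0\ge0$ once $n_\varepsilon\ge0$ is known; and $n_\varepsilon\ge0$ follows by a standard maximum-principle argument applied to the first equation — at an interior spatial minimum at which $n_\varepsilon$ were negative one has $\nabla n_\varepsilon=0$, hence $\Delta(n_\varepsilon+\varepsilon)^m=m\varepsilon^{m-1}\Delta n_\varepsilon\ge0$ and the drift and cross-diffusion contributions vanish, so $\partial_t n_\varepsilon\ge0$ there — using $n_0\ge0$; again the non-degeneracy $n_\varepsilon+\varepsilon\ge\varepsilon>0$ enters.

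The step I expect to be the main technical obstacle is the self-mapping and contraction estimate for the $n$-component: its diffusion coefficient $m(\tilde n+\varepsilon)^{m-1}$ depends on the unknown and is only as regular as $\tilde n$, so one must work in a topology that simultaneously controls $\tilde n$ strongly enough (for instance in a spatial Hölder norm, rather than merely in $C^0$) to treat this coefficient by Schauder/maximal-regularity estimates, and yet is reproduced by the linear solution operator. Balancing these two requirements is precisely where the local-existence arguments of \cite{Winkler11215,Winkler51215,Painter55677} have to be adapted, and where the lower bound $n_\varepsilon+\varepsilon\ge\varepsilon$ — hence the $\varepsilon$-dependence of every constant in this lemma — is indispensable; the genuinely $\varepsilon$-uniform estimates are deferred to Section~3.
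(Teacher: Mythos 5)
Your proposal is correct and follows the same fixed-point route the paper delegates to its references (\cite{Winkler11215}, Lemma 2.1 of \cite{Painter55677}, Lemma 2.1 of \cite{Winkler51215}) — the paper gives no proof of this lemma itself — and your central structural observation, that the $\varepsilon$-regularization restores uniform parabolicity with diffusion coefficient bounded below by $m\varepsilon^{m-1}$, is exactly what those constructions exploit. One small point to tidy in the nonnegativity step: the identity $\Delta(n_\varepsilon+\varepsilon)^m = m\varepsilon^{m-1}\Delta n_\varepsilon$ is valid at a first-touch point where $n_\varepsilon=0$, not at a point where $n_\varepsilon$ is hypothetically negative (there the coefficient is $m(n_\varepsilon+\varepsilon)^{m-1}$ and, for $m\notin\mathbb{N}$, the power may even be ill-defined below $-\varepsilon$), so this should be phrased as a continuity/first-touch comparison argument from $n_0\ge 0$.
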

Given all $s_0\in (0, T_{max})$, from the regularity properties asserted by Lemma \ref{lemma70} we know that
 there exists
$\beta>0$ such that
\begin{equation}\label{eqx45xx1ddfgggg2112}
\|n_\varepsilon(\tau)\|_{L^\infty(\Omega)}\leq \beta~~~\|u_\varepsilon(\tau)\|_{W^{1,\infty}(\Omega)}\leq \beta~~\mbox{and}~~\|c_\varepsilon(\tau)\|_{W^{2,\infty}(\Omega)}\leq \beta~~\mbox{for all}~~\tau\in[0,s_0].
\end{equation}

\begin{lemma}(\cite{Winkler11215})\label{lemma630jklhhjj}
 Let $l\in[1,+\infty)$ and $r\in[1,+\infty]$ be such that
 \begin{equation}
\left\{\begin{array}{ll}
l<\frac{3r}{3-r}~~\mbox{if}~~
r\leq 3,\\
l\leq\infty~~\mbox{if}~~
r>3.
 \end{array}\right.\label{3.10gghhjuulooll}
\end{equation}
Then for all $K > 0$ there exists $C = C(l, r,K)$ such that 
 \begin{equation}\|n(\cdot, t)\|_{L^r(\Omega)}\leq K~~ \mbox{for all}~~ t\in(0, T_{max}),
\label{3.10gghhjuuloollgghhhy}
\end{equation}
then
 \begin{equation}\|D u(\cdot, t)\|_{L^l(\Omega)}\leq C~~ \mbox{for all}~~ t\in(0, T_{max}).
\label{3.10gghhjuuloollgghhhyhh}
\end{equation}
\end{lemma}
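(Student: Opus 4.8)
The function $u$ solves the Stokes system $u_t+\nabla P=\Delta u+n\nabla\phi$, $\nabla\cdot u=0$ in $\Omega\times(0,T_{max})$ with $u=0$ on $\partial\Omega$ and $u(\cdot,0)=u_0$ (that is, the $u$-component of \dref{1.1fghyuisda}), so the plan is a straightforward application of smoothing estimates for the Stokes semigroup. Write $A=-\mathcal{P}\Delta$ for the Stokes operator with the domain recalled just before Theorem~\ref{theorem3} and $\mathcal{P}$ for the Helmholtz projection; applying $\mathcal{P}$ to the $u$-equation removes the pressure and gives $u_t+Au=\mathcal{P}(n\nabla\phi)$, $u(0)=u_0$, whence, by the variation-of-constants formula,
$$u(t)=e^{-tA}u_0+\int_0^t e^{-(t-s)A}\mathcal{P}\big(n(s)\nabla\phi\big)\,ds\qquad\text{for all }t\in(0,T_{max}).$$
The two ingredients I would use are the standard facts that $\|\nabla v\|_{L^{q}(\Omega)}\le C\|A^{1/2}v\|_{L^{q}(\Omega)}$ for $v\in D(A^{1/2}_{q})$, and that for $1\le r\le q\le\infty$ and $\beta\ge0$ there are $C>0$, $\lambda>0$ with
$$\big\|A^{\beta}e^{-\sigma A}\mathcal{P}f\big\|_{L^{q}(\Omega)}\le C\,\sigma^{-\beta-\frac{3}{2}\left(\frac1r-\frac1q\right)}e^{-\lambda\sigma}\|f\|_{L^{r}(\Omega)}\qquad\text{for all }\sigma>0\text{ and }f\in L^{r}_{\sigma}(\Omega).$$

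It is enough to treat the case $l\ge r$, since for $l<r$ the boundedness of $\Omega$ gives $\|n(\cdot,t)\|_{L^{l}(\Omega)}\le C\|n(\cdot,t)\|_{L^{r}(\Omega)}\le CK$ and the argument may be run with $r$ replaced by $l$, for which \dref{3.10gghhjuulooll} holds trivially. Assuming $l\ge r$, I would apply $\nabla$ to the Duhamel term and use the two displayed facts with $\beta=\frac12$, together with $\|n(s)\nabla\phi\|_{L^{r}(\Omega)}\le\|\nabla\phi\|_{L^{\infty}(\Omega)}\|n(s)\|_{L^{r}(\Omega)}\le\|\phi\|_{W^{1,\infty}(\Omega)}K$ — this is where assumption \dref{dd1.1fghyuisdakkkllljjjkk} and the hypothesis \dref{3.10gghhjuuloollgghhhy} are used — to obtain
$$\Big\|\nabla\!\int_0^t e^{-(t-s)A}\mathcal{P}\big(n(s)\nabla\phi\big)\,ds\Big\|_{L^{l}(\Omega)}\le C\|\phi\|_{W^{1,\infty}(\Omega)}K\int_0^t(t-s)^{-\frac12-\frac32\left(\frac1r-\frac1l\right)}e^{-\lambda(t-s)}\,ds;$$
substituting $\tau=t-s$, the integral is bounded uniformly in $t$ by $\int_0^{\infty}\tau^{-\mu}e^{-\lambda\tau}\,d\tau<\infty$ with $\mu:=\frac12+\frac32\left(\frac1r-\frac1l\right)$, the finiteness being equivalent to $\mu<1$, i.e.\ to $\frac1r-\frac1l<\frac13$, which is precisely the compatibility condition \dref{3.10gghhjuulooll} ($l<\frac{3r}{3-r}$ if $r\le3$, and no extra restriction if $r>3$). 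For the linear term $\|\nabla e^{-tA}u_0\|_{L^{l}(\Omega)}$ I would invoke \dref{ccvvx1.731426677gg}: choosing $\rho\in(1,\infty)$ so large that $D(A^{\gamma}_{\rho})\hookrightarrow W^{1,l}(\Omega)$ — possible because $\gamma>\frac34$ forces $2\gamma>\frac32$ — one gets $\|\nabla e^{-tA}u_0\|_{L^{l}(\Omega)}\le C\|A^{\gamma}e^{-tA}u_0\|_{L^{\rho}(\Omega)}\le Ce^{-\lambda t}\|A^{\gamma}u_0\|_{L^{\rho}(\Omega)}$, which is bounded uniformly in $t>0$.

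Adding the two estimates yields $\|\nabla u(\cdot,t)\|_{L^{l}(\Omega)}\le C$ for all $t\in(0,T_{max})$, with $C$ depending only on $l$, $r$, $K$ besides the fixed data $\Omega,\phi,u_0$; since $u=0$ on $\partial\Omega$, Poincar\'e's inequality converts this into the asserted bound \dref{3.10gghhjuuloollgghhhyhh} for $\|Du(\cdot,t)\|_{L^{l}(\Omega)}$. I do not expect a serious obstacle here — this is the classical Stokes-semigroup smoothing argument — the only points demanding attention being the exact matching of the time-integrability of the smoothing kernel with the stated condition \dref{3.10gghhjuulooll} on $(l,r)$, and the uniform-in-time control of the initial-data term, for which the precise form of the hypothesis on $u_0$ in \dref{ccvvx1.731426677gg} (with $\gamma\in(\frac34,1)$ and arbitrary $\rho\in(1,\infty)$) is exactly what is needed.
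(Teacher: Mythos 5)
The paper offers no proof of this lemma at all—it simply cites Winkler's work—and the argument there is exactly the Stokes-semigroup smoothing computation you give, so your proposal is correct and takes essentially the same (standard) route, including the correct matching of the kernel integrability condition $\frac1r-\frac1l<\frac13$ with the stated hypothesis on $(l,r)$ and the use of $u_0\in D(A^\gamma_\rho)$ for large $\rho$ to control the initial-data term. The one point worth tightening is the borderline case $r=1$ (which is the case actually invoked later in the paper via the mass bound): the Helmholtz projection is not bounded on $L^1(\Omega)$, so the smoothing estimate must be applied to the composite operator $A^{\beta}e^{-\sigma A}\mathcal{P}$ viewed as a map from $L^1(\Omega)$, rather than by first estimating $\|\mathcal{P}(n\nabla\phi)\|_{L^1}$.
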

\begin{lemma}\label{lemma45xy1222232}(\cite{Hieber,Zhengddkkllssssssssdefr23,Zhengssdddssddddkkllssssssssdefr23})
Suppose  $\gamma\in (1,+\infty)$, $g\in L^\gamma((0, T); L^\gamma(
\Omega))$ and  $v_0\in W^{2,\gamma}(\Omega)$
such that $\disp\frac{\partial v_0}{\partial \nu}=0$.
Let $v$ be a solution of the following initial boundary value
 \begin{equation}
 \left\{\begin{array}{ll}
 v_t -\Delta v=g,~~(x, t)\in
\Omega\times(0, T ),\\
\disp\frac{\partial v}{\partial \nu}=0,~~(x, t)\in
 \partial\Omega\times(0, T ),\\
v(x,0)=v_0(x),~~~(x, t)\in
 \Omega.\\
 \end{array}\right.\label{33331.3xcx29}
\end{equation}
Then there exists a positive constant $\delta_0$ such that
\begin{equation}
\begin{array}{rl}
&\disp{\int_0^T\|v(\cdot,t)\|^{\gamma}_{L^{\gamma}(\Omega)}dt+\int_0^T\|v_t(\cdot,t)\|^{\gamma}_{L^{\gamma}(\Omega)}dt+\int_0^T\|\Delta v(\cdot,t)\|^{\gamma}_{L^{\gamma}(\Omega)}dt}\\
\leq&\disp{\delta_0\left(\int_0^T\|g(\cdot,t)\|^{\gamma}_{L^{\gamma}(\Omega)}dt+\|v_0(\cdot,t)\|^{\gamma}_{L^{\gamma}(\Omega)}+\|\Delta v_0(\cdot,t)\|^{\gamma}_{L^{\gamma}(\Omega)}\right).}\\
\end{array}
\label{cz2.5bbv}
\end{equation}
On the other hand,
assuming $v$ is
a solution of the following initial boundary value
 \begin{equation}
 \left\{\begin{array}{ll}
 v_t -\Delta v+v=g,~~~(x, t)\in
 \Omega\times(0, T ),\\
\disp\frac{\partial v}{\partial \nu}=0,~~~(x, t)\in
 \partial\Omega\times(0, T ),\\
v(x,0)=v_0(x),~~~(x, t)\in
 \Omega.\\
 \end{array}\right.\label{33331.3xcx29}
\end{equation}
Then there exists a positive constant $C_{\gamma}:=C_{\gamma,|\Omega|}$ such that if $s_0\in[0,T)$, $v(\cdot,s_0)\in W^{2,\gamma}(\Omega)(\gamma>N)$ with $\disp\frac{\partial v(\cdot,s_0)}{\partial \nu}=0,$ then
\begin{equation}
\begin{array}{rl}
&\disp{\int_{s_0}^Te^{\gamma  s}(\| v(\cdot,t)\|^{\gamma}_{L^{\gamma}(\Omega)}+\|\Delta v(\cdot,t)\|^{\gamma}_{L^{\gamma}(\Omega)})ds}\\
\leq &\disp{C_{\gamma}\left(\int_{s_0}^Te^{\gamma  s}
\|g(\cdot,s)\|^{\gamma}_{L^{\gamma}(\Omega)}ds+e^{\gamma  s}(\|v_0(\cdot,s_0)\|^{\gamma}_{L^{\gamma}(\Omega)}+\|\Delta v_0(\cdot,s_0)\|^{\gamma}_{L^{\gamma}(\Omega)})\right).}\\
\end{array}
\label{3333cz2.5bbv114}
\end{equation}
\end{lemma}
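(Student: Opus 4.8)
The plan is to derive both inequalities from the maximal $L^\gamma$-regularity of the Neumann Laplacian recorded in \cite{Hieber}, the second (exponentially weighted) one being reduced to the first by an exponential rescaling of the time variable.

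First I would fix notation: put $X:=L^\gamma(\Omega)$ and let $\mathcal{A}:=-\Delta$ act on $X$ with domain $D(\mathcal{A}):=\{w\in W^{2,\gamma}(\Omega):\ \partial_\nu w=0\ \text{on}\ \partial\Omega\}$. Since $\Omega\subset\mathbb{R}^3$ is bounded with smooth boundary and $\gamma>1$, $-\mathcal{A}$ generates a bounded analytic $C_0$-semigroup on $X$ and, by \cite{Hieber}, $\mathcal{A}$ enjoys maximal $L^\gamma$-regularity on bounded time intervals: for $g\in L^\gamma(0,T;X)$ and $v_0$ in the associated trace space the solution of $v_t+\mathcal{A}v=g$, $v(0)=v_0$ satisfies
\[
\|v_t\|_{L^\gamma(0,T;X)}+\|\Delta v\|_{L^\gamma(0,T;X)}\le C\bigl(\|g\|_{L^\gamma(0,T;X)}+\|v_0\|_{W^{2,\gamma}(\Omega)}\bigr),
\]
where the $W^{2,\gamma}$-norm on the right may be replaced by $\|v_0\|_{L^\gamma(\Omega)}+\|\Delta v_0\|_{L^\gamma(\Omega)}$ thanks to $L^\gamma$-elliptic regularity for the Neumann Laplacian (here the compatibility condition $\partial_\nu v_0=0$ is essential). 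To recover in addition the term $\int_0^T\|v\|_{L^\gamma}^\gamma$ present on the left of \dref{cz2.5bbv} --- which is not part of the bare maximal-regularity estimate --- I would write $v(t)=v_0+\int_0^t v_s\,ds$ and apply H\"older's inequality in time, giving $\|v\|_{L^\gamma(0,T;X)}\le C(\|v_0\|_X+\|v_t\|_{L^\gamma(0,T;X)})$. Combining the three contributions establishes \dref{cz2.5bbv}.

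For the weighted estimate \dref{3333cz2.5bbv114} the decisive step is the substitution $w(s,\cdot):=e^{s}v(s,\cdot)$ for $s\ge s_0$. Using $v_t-\Delta v+v=g$ one computes
\[
w_s=e^{s}(v+v_s)=e^{s}\bigl(v+\Delta v-v+g\bigr)=\Delta(e^{s}v)+e^{s}g=\Delta w+e^{s}g,
\]
so that $w$ solves the \emph{pure} Neumann heat equation $w_s-\Delta w=e^{s}g$ on $\Omega\times(s_0,T)$ with $\partial_\nu w=0$ and $w(\cdot,s_0)=e^{s_0}v(\cdot,s_0)\in W^{2,\gamma}(\Omega)$, $\partial_\nu w(\cdot,s_0)=0$. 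Applying the estimate \dref{cz2.5bbv} (translated so that the initial time is $s_0$) to $w$ and then undoing the substitution through the identities $\|w(s)\|_X^\gamma=e^{\gamma s}\|v(s)\|_X^\gamma$, $\|\Delta w(s)\|_X^\gamma=e^{\gamma s}\|\Delta v(s)\|_X^\gamma$, $\|e^{s}g(s)\|_X^\gamma=e^{\gamma s}\|g(s)\|_X^\gamma$ and $\|w(s_0)\|_X^\gamma=e^{\gamma s_0}\|v(s_0)\|_X^\gamma$, $\|\Delta w(s_0)\|_X^\gamma=e^{\gamma s_0}\|\Delta v(s_0)\|_X^\gamma$, one obtains precisely \dref{3333cz2.5bbv114}.

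I expect the only genuinely technical point to be the maximal-regularity bookkeeping in the first step: one has to invoke the theorem in the version that admits nonzero initial data lying in the real interpolation space $(X,D(\mathcal{A}))_{1-\frac1\gamma,\gamma}$, check that $W^{2,\gamma}(\Omega)$ together with the Neumann compatibility condition embeds continuously into that trace space, and justify replacing $\|v_0\|_{W^{2,\gamma}}$ by $\|v_0\|_{L^\gamma}+\|\Delta v_0\|_{L^\gamma}$ --- which is exactly where smoothness of $\partial\Omega$ and $\gamma>N$ (in particular $\gamma>1$) come in. The exponential change of variables and the recovery of the zeroth-order-in-time term are then routine.
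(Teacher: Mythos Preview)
The paper does not supply a proof of this lemma at all: it is stated with citations to \cite{Hieber,Zhengddkkllssssssssdefr23,Zhengssdddssddddkkllssssssssdefr23} and then used as a black box in Lemma~\ref{lemma45566645630223}. So there is no ``paper's own proof'' to compare against; your task is really to reconstruct the argument underlying those references, and you have done so correctly. The two ingredients --- maximal $L^\gamma$-regularity of the Neumann Laplacian from \cite{Hieber}, and the exponential substitution $w=e^{s}v$ to pass from $v_t-\Delta v+v=g$ to $w_s-\Delta w=e^{s}g$ --- are exactly the standard route, and your computation of $w_s$ is clean.

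One small technical remark: your recovery of the zeroth-order term $\int_0^T\|v\|_{L^\gamma}^\gamma$ via $v(t)=v_0+\int_0^t v_s\,ds$ and H\"older produces a constant that grows with $T$. This is harmless for \dref{cz2.5bbv} as stated, but for \dref{3333cz2.5bbv114} the constant $C_\gamma$ is advertised as depending only on $\gamma$ and $|\Omega|$, not on $T-s_0$. To get that uniformity you should instead exploit the exponential decay of the semigroup generated by $-\Delta+1$ (e.g.\ via the Duhamel representation and Young's convolution inequality in time), rather than route through the bare heat equation estimate. In the paper's actual application this point is immaterial, since the weighted estimate is immediately multiplied by $e^{-(p+1)t}$ and only bounded quantities are extracted, but it is worth flagging. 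Also, the condition $\gamma>N$ in the lemma's second part plays no role in your argument and appears to be an artifact of the statement rather than something you need to invoke.
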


\section{A priori estimates}
In this section, we proceed to derive $\varepsilon$-independent estimates for the approximate solutions constructed above. The iteration depends on a series of a priori estimate. To this end, throughout
this section, for any $\varepsilon\in(0, 1)$, we let $(n_\varepsilon, c_\varepsilon, u_\varepsilon, P_\varepsilon)$  be the global solution of problem  \dref{1.1fghyuisda}.
The following estimates of $n_{\varepsilon}$ and $c_{\varepsilon}$ are basic but important in the proof of our result.
\begin{lemma}\label{fvfgfflemma45}
There exists 
$\lambda > 0$ independent of $\varepsilon$ such that the solution of \dref{1.1fghyuisda} satisfies
%
%
\begin{equation}
\int_{\Omega}{n_{\varepsilon}}+\int_{\Omega}{c_{\varepsilon}}\leq \lambda~~\mbox{for all}~~ t\in(0, T_{max}).
\label{ddfgczhhhh2.5ghju48cfg924ghyuji}
\end{equation}
%
%
\end{lemma}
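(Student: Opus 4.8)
The plan is to establish the two bounds $\int_\Omega n_\varepsilon \leq C$ and $\int_\Omega c_\varepsilon \leq C$ by exploiting the conservative structure of the first equation together with a standard ODE argument for the total mass of $c_\varepsilon$. First I would integrate the $n_\varepsilon$-equation over $\Omega$. Since $u_\varepsilon$ is divergence free and vanishes on $\partial\Omega$, the convective term $\int_\Omega u_\varepsilon\cdot\nabla n_\varepsilon = \int_\Omega \nabla\cdot(u_\varepsilon n_\varepsilon)$ vanishes; likewise $\int_\Omega \Delta(n_\varepsilon+\varepsilon)^m = 0$ by the homogeneous Neumann condition on $n_\varepsilon$, and $\int_\Omega \nabla\cdot(n_\varepsilon\nabla c_\varepsilon) = 0$ by the Neumann condition on $c_\varepsilon$. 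Hence $\frac{d}{dt}\int_\Omega n_\varepsilon = 0$, so $\int_\Omega n_\varepsilon(\cdot,t) = \int_\Omega n_0 =: m_0$ for all $t\in(0,T_{max})$, which is the desired $\varepsilon$-independent bound on the $n_\varepsilon$ mass.

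Next I would integrate the $c_\varepsilon$-equation over $\Omega$. Again the convective term drops by the divergence-free and boundary conditions, and $\int_\Omega \Delta c_\varepsilon = 0$ by the Neumann condition, leaving $\frac{d}{dt}\int_\Omega c_\varepsilon = -\int_\Omega c_\varepsilon + \int_\Omega n_\varepsilon = -\int_\Omega c_\varepsilon + m_0$. This is a linear scalar ODE for $y(t) := \int_\Omega c_\varepsilon(\cdot,t)$: $y' = -y + m_0$, so $y(t) = m_0 + (y(0) - m_0)e^{-t} \leq \max\{y(0), m_0\} = \max\{\int_\Omega c_0, m_0\}$ for all $t$. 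This furnishes an $\varepsilon$-independent bound on $\int_\Omega c_\varepsilon$. Adding the two bounds and setting $\lambda := m_0 + \max\{\int_\Omega c_0, m_0\}$ (or any convenient majorant) yields \dref{ddfgczhhhh2.5ghju48cfg924ghyuji}.

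There is essentially no obstacle here — this is the standard opening estimate in chemotaxis(-fluid) theory. The only points requiring a word of care are: verifying that the regularity in \dref{1.1ddfghyuisda} is sufficient to justify differentiating under the integral sign and integrating by parts (it is, since the solution is classical on $\Omega\times(0,T_{max})$ and continuous up to $t=0$), and noting that nonnegativity of $n_\varepsilon$ and $c_\varepsilon$ from Lemma \ref{lemma70} makes the $L^1$ norms coincide with the integrals, so the estimate is genuinely a bound on $\|n_\varepsilon\|_{L^1}$ and $\|c_\varepsilon\|_{L^1}$. The constant $\lambda$ depends only on $\int_\Omega n_0$ and $\int_\Omega c_0$, hence on the fixed initial data, and not on $\varepsilon$ or on $t$, as claimed.
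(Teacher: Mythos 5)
Your proof is correct and is the standard argument for this opening estimate: conservation of mass for $n_\varepsilon$ by integrating the first equation (all divergence-form terms vanish by the boundary conditions and $\nabla\cdot u_\varepsilon=0$), and a linear ODE comparison for $\int_\Omega c_\varepsilon$ using that conserved mass as the source. The paper states this lemma without proof precisely because it is this routine computation, so your argument matches what the authors intend.
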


We next show the following lemma which holds a key for the proof of Theorem \ref{theorem3}.
Employing the same  arguments as in the proof of Lemma 3.3 in \cite{Zhengssdddd00} (see also \cite{Zhengsddfff00}), we derive  the following Lemma: 
\begin{lemma}\label{lemmaghjssddgghhmk4563025xxhjklojjkkk}
Let $m>\frac{4}{3}$.
Then there exists $C>0$ independent of $\varepsilon$ such that the solution of \dref{1.1fghyuisda} satisfies
\begin{equation}
\begin{array}{rl}
&\disp{\int_{\Omega}(n_{\varepsilon}+\varepsilon)^{m-1}+\int_{\Omega}   c_{\varepsilon}^2+\int_{\Omega}  | {u_{\varepsilon}}|^2\leq C~~~\mbox{for all}~~ t\in (0, T_{max}).}\\
\end{array}
\label{czfvgb2.5ghhjuyuccvviihjj}
\end{equation}
Moreover, for $T\in(0, T_{max})$, it holds that
one can find a constant $C > 0$ independent of $\varepsilon$ such that
\begin{equation}
\begin{array}{rl}
&\disp{\int_{0}^T\int_{\Omega} \left[ (n_{\varepsilon}+\varepsilon)^{2m-4} |\nabla {n_{\varepsilon}}|^2+ |\nabla {c_{\varepsilon}}|^2+ |\nabla {u_{\varepsilon}}|^2\right]\leq C.}\\
\end{array}
\label{bnmbncz2.5ghhjuyuivvbnnihjj}
\end{equation}
\end{lemma}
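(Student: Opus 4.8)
\textbf{Proof strategy for Lemma \ref{lemmaghjssddgghhmk4563025xxhjklojjkkk}.}
The plan is to work with the coupled energy functional $E_\varepsilon(t):=\frac{1}{m-1}\int_\Omega (n_\varepsilon+\varepsilon)^{m-1}+\frac{a}{2}\int_\Omega c_\varepsilon^2+\frac{b}{2}\int_\Omega |u_\varepsilon|^2$ for suitably chosen constants $a,b>0$, and to show that it satisfies a differential inequality of the form $\frac{d}{dt}E_\varepsilon+E_\varepsilon+\int_\Omega (n_\varepsilon+\varepsilon)^{2m-4}|\nabla n_\varepsilon|^2+\int_\Omega|\nabla c_\varepsilon|^2+\int_\Omega|\nabla u_\varepsilon|^2\le C$. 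Testing the first equation of \dref{1.1fghyuisda} by $(n_\varepsilon+\varepsilon)^{m-2}$ produces the good dissipation term $-(m-2)(m-1)\int_\Omega (n_\varepsilon+\varepsilon)^{2m-4}|\nabla n_\varepsilon|^2$ (after care with the sign of $m-2$, which is why one works with the combination above rather than a single power) together with a cross term $(m-1)\int_\Omega (n_\varepsilon+\varepsilon)^{m-2}\nabla\cdot(n_\varepsilon\nabla c_\varepsilon)$; the convective term drops since $\nabla\cdot u_\varepsilon=0$. Testing the second equation by $c_\varepsilon$ yields $-\int_\Omega|\nabla c_\varepsilon|^2-\int_\Omega c_\varepsilon^2+\int_\Omega n_\varepsilon c_\varepsilon$, and projecting the third equation and testing by $u_\varepsilon$ gives $-\int_\Omega|\nabla u_\varepsilon|^2+\int_\Omega n_\varepsilon u_\varepsilon\cdot\nabla\phi$, with the last integral controlled via $\|\nabla\phi\|_{L^\infty}$, a Sobolev embedding $\|u_\varepsilon\|_{L^6}\lesssim\|\nabla u_\varepsilon\|_{L^2}$ (Poincaré, Dirichlet data) and an $L^{6/5}$-bound on $n_\varepsilon$ obtained from interpolating the mass bound of Lemma \ref{fvfgfflemma45} against the gradient dissipation.

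The heart of the matter is absorbing the chemotaxis cross term. After integration by parts it becomes, up to constants, $\int_\Omega (n_\varepsilon+\varepsilon)^{m-3}n_\varepsilon\nabla n_\varepsilon\cdot\nabla c_\varepsilon$, which I would split using $n_\varepsilon\le n_\varepsilon+\varepsilon$ and Young's inequality into a piece $\eta\int_\Omega (n_\varepsilon+\varepsilon)^{2m-4}|\nabla n_\varepsilon|^2$ (absorbed by the diffusion dissipation, for small $\eta$) and a piece $C_\eta\int_\Omega (n_\varepsilon+\varepsilon)^{2}|\nabla c_\varepsilon|^2$ — wait, more precisely $C_\eta\int_\Omega (n_\varepsilon+\varepsilon)^{2\cdot2-(2m-4)}|\nabla c_\varepsilon|^2$ after balancing the exponents, i.e. a term of the form $\int_\Omega (n_\varepsilon+\varepsilon)^{4-2m+\text{(power from }n_\varepsilon)}|\nabla c_\varepsilon|^2$. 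This must be dominated by the $c$-dissipation plus lower-order mass terms; this is where the condition $m>\frac{4}{3}$ enters, since only then do the Sobolev/Gagliardo--Nirenberg exponents close. Concretely one estimates $\int_\Omega (n_\varepsilon+\varepsilon)^{p}|\nabla c_\varepsilon|^2$ by Hölder against $\|\nabla c_\varepsilon\|_{L^2}^2$ is not enough; instead one uses the auxiliary estimate on $\|\nabla c_\varepsilon\|_{L^2}$ coming from testing the $c$-equation by $-\Delta c_\varepsilon$ (or uses maximal Sobolev regularity, cf. Lemma \ref{lemma45xy1222232}) to gain an $L^q$-bound on $\nabla c_\varepsilon$ with $q>2$, then Gagliardo--Nirenberg on $(n_\varepsilon+\varepsilon)^{m-1}$ to interpolate $\|n_\varepsilon\|$ in the right norm, and the exponent arithmetic forces exactly $m>\frac43$.

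Once the differential inequality $\frac{d}{dt}E_\varepsilon+E_\varepsilon+(\text{dissipation})\le C$ is in hand, an ODE comparison (Grönwall) yields $E_\varepsilon(t)\le\max\{E_\varepsilon(0),C\}$ uniformly in $\varepsilon\in(0,1)$ and in $t\in(0,T_{max})$, using \dref{ccvvx1.731426677gg} to bound $E_\varepsilon(0)$, which gives \dref{czfvgb2.5ghhjuyuccvviihjj}. Integrating the same inequality over $(0,T)$ and discarding the nonnegative $E_\varepsilon(T)$ gives the space-time bound \dref{bnmbncz2.5ghhjuyuivvbnnihjj}. I expect the main obstacle to be the bookkeeping of the interpolation exponents in controlling the mixed term $\int_\Omega(n_\varepsilon+\varepsilon)^{p}|\nabla c_\varepsilon|^2$: one has to simultaneously keep the power of $n_\varepsilon$ low enough that the Gagliardo--Nirenberg interpolation of $(n_\varepsilon+\varepsilon)^{m-1}$ does not overshoot, and keep the power of $\nabla c_\varepsilon$ at most the $L^q$-norm one can actually control; these two constraints intersect precisely in the regime $m>\frac43$, and making this fully rigorous (rather than just formal) is the technical crux. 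Since the statement says this follows the argument of Lemma 3.3 in \cite{Zhengssdddd00}, I would mirror that testing scheme, substituting the Keller--Segel production term $-c+n$ for the consumption term there and adjusting the energy weights accordingly.
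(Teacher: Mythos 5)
The paper itself does not present a proof of this lemma; it simply defers to Lemma~3.3 of~\cite{Zhengssdddd00}, so a direct comparison with the paper's argument is not possible. Judging the proposal on its own terms, the overall testing scheme (test the $n$-equation by a power of $n_\varepsilon+\varepsilon$, the $c$-equation by $c_\varepsilon$, the projected $u$-equation by $u_\varepsilon$, couple with Young's inequality, and close by Gr\"onwall) is the right framework, and the treatment of the $c$- and $u$-blocks and of the chemotaxis cross term is broadly correct. However, there is a genuine gap in the centerpiece of the argument, namely the claim that testing the first equation by $(n_\varepsilon+\varepsilon)^{m-2}$ ``produces the good dissipation term'' proportional to $-\int_\Omega(n_\varepsilon+\varepsilon)^{2m-4}|\nabla n_\varepsilon|^2$.

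A direct computation gives
\begin{equation*}
\int_\Omega (n_\varepsilon+\varepsilon)^{m-2}\,\Delta(n_\varepsilon+\varepsilon)^m
= -\,m(m-2)\int_\Omega (n_\varepsilon+\varepsilon)^{2m-4}|\nabla n_\varepsilon|^2,
\end{equation*}
so the coefficient of the putative dissipation is $-m(m-2)$, not $-(m-2)(m-1)$ as you write; more importantly, for $\tfrac{4}{3}<m<2$ one has $m-2<0$, hence $-m(m-2)>0$: this term is an \emph{anti}-dissipation, and your differential inequality $\frac{d}{dt}E_\varepsilon+E_\varepsilon+\text{(dissipation)}\leq C$ simply does not hold as stated on that subrange of $m$, which is certainly nonempty. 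Adding the $c$- and $u$-energies with weights $a,b$ cannot repair this, since $-a\int|\nabla c_\varepsilon|^2$ and $-b\int|\nabla u_\varepsilon|^2$ are unrelated integrals and cannot cancel a term of the form $+\int(n_\varepsilon+\varepsilon)^{2m-4}|\nabla n_\varepsilon|^2$. You gesture at this issue (``after care with the sign of $m-2$, which is why one works with the combination above'') but the proposed fix is not a fix.

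The correct way to handle all $m>1$ uniformly is to use the test function $\frac{(n_\varepsilon+\varepsilon)^{m-2}}{m-2}$, whose derivative $(n_\varepsilon+\varepsilon)^{m-3}$ is positive for every $m$, so that the diffusion contribution is $-m\int_\Omega(n_\varepsilon+\varepsilon)^{2m-4}|\nabla n_\varepsilon|^2\le 0$ regardless of whether $m$ lies above or below $2$. The corresponding ``entropy'' is then $\frac{1}{(m-1)(m-2)}\int_\Omega (n_\varepsilon+\varepsilon)^{m-1}$, which is negative for $m<2$ but still bounded below thanks to the mass bound in Lemma~\ref{fvfgfflemma45} together with H\"older (indeed, for $m<2$ the boundedness of $\int_\Omega (n_\varepsilon+\varepsilon)^{m-1}$ is trivial from $\int_\Omega (n_\varepsilon+\varepsilon)^{m-1}\le(\int_\Omega(n_\varepsilon+\varepsilon))^{m-1}|\Omega|^{2-m}$, so the first assertion of the lemma in that range does not even require the energy estimate, only the $c$- and $u$-parts do). Alternatively one can argue separately for $m<2$ and $m\ge2$: for $m<2$ the reversed-sign relation bounds $\int_0^T\int_\Omega(n_\varepsilon+\varepsilon)^{2m-4}|\nabla n_\varepsilon|^2$ by the variation of the already bounded quantity $\int_\Omega(n_\varepsilon+\varepsilon)^{m-1}$ plus $\int_0^T\int_\Omega|\nabla c_\varepsilon|^2$. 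Either way, the case $m\in(\tfrac{4}{3},2)$ requires a structurally different step that your proposal does not supply; as written, the argument only covers $m\geq 2$.
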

In the following, we shall derive an important inequality, which plays a key role
in the proof of our main result.
\begin{lemma}\label{lemma45630223116}
Let
 $p=\frac{25}{16}$ 
  and $\theta=\frac{8}{7}$.
Then there exists a positive constant $\tilde{l}_0\in(\frac{1737}{582},3)$ such that
\begin{equation}
\frac{\frac{5}{6}-\frac{1}{\theta'(p+1)}}{\frac{7}{6}-\frac{1}{p+1}}+\frac{\frac{1}{\tilde{l}_0}-\frac{1}{\theta (p+1)}}{\frac{1}{\tilde{l}_0}+\frac{2}{3}-\frac{1}{p+1}}<1,
\label{zjscz2.5297x9630222211444125}
\end{equation}
where $\theta'=\frac{\theta}{\theta-1}=8.$
\end{lemma}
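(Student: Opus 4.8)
The claim is a pure real-analysis verification: for the specific choices $p=\tfrac{25}{16}$, $\theta=\tfrac{8}{7}$ (so $\theta'=8$), one must exhibit $\tilde l_0\in(\tfrac{1737}{582},3)$ making the displayed sum of two fractions strictly less than $1$. My plan is therefore to treat the left-hand side of \dref{zjscz2.5297x9630222211444125} as an explicit function of the single free parameter $\tilde l_0$, show it is continuous and monotone on the relevant interval, and then evaluate at a convenient endpoint or interior point to conclude that the strict inequality holds on a nonempty subinterval.

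First I would substitute the numerics into the first summand, which does not involve $\tilde l_0$. With $p+1=\tfrac{41}{16}$ and $\theta'=8$, the numerator $\tfrac56-\tfrac{1}{\theta'(p+1)}=\tfrac56-\tfrac{16}{8\cdot41}=\tfrac56-\tfrac{2}{41}$ and the denominator $\tfrac76-\tfrac{1}{p+1}=\tfrac76-\tfrac{16}{41}$ are both positive, so the first fraction equals a fixed rational number $A\in(0,1)$; a short computation gives $A=\frac{\tfrac56-\tfrac{2}{41}}{\tfrac76-\tfrac{16}{41}}=\frac{41\cdot5-12}{41\cdot7-96}\cdot\frac{6}{6}=\frac{193}{191}\cdot\ldots$ — more carefully, clearing the common factor $\tfrac16$ one finds $A=\frac{5\cdot41-12}{7\cdot41-96}=\frac{193}{191}$, which exceeds $1$, so I would recheck the arithmetic; in any case $A$ is an explicit constant. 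Then I would write the second summand as $B(\tilde l_0)=\dfrac{\tfrac{1}{\tilde l_0}-\tfrac{1}{\theta(p+1)}}{\tfrac{1}{\tilde l_0}+\tfrac23-\tfrac{1}{p+1}}$ with $\theta(p+1)=\tfrac87\cdot\tfrac{41}{16}=\tfrac{41}{14}$, and observe that the map $s\mapsto\frac{s-a}{s-b}$ with $s=1/\tilde l_0$, $a=\tfrac{14}{41}$, $b=\tfrac{1}{p+1}-\tfrac23=\tfrac{16}{41}-\tfrac23$ is monotone in $s$, hence monotone in $\tilde l_0$ on $(\tfrac{1737}{582},3)$ — and crucially, $B(\tilde l_0)\to 0$ is impossible but $B$ decreases as $\tilde l_0\uparrow 3$, so the minimum of $A+B$ over the interval is attained as $\tilde l_0\to3^-$.

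The decisive step is then to evaluate the limit $\lim_{\tilde l_0\to3^-}\bigl(A+B(\tilde l_0)\bigr)$ and check it is $<1$, together with continuity to guarantee that strict inequality persists on an open neighborhood, so that some admissible $\tilde l_0<3$ (and $>\tfrac{1737}{582}$) genuinely works; the lower bound $\tfrac{1737}{582}$ is presumably dictated by a separate constraint elsewhere (positivity of numerators/denominators in the Gagliardo--Nirenberg exponents that produced \dref{zjscz2.5297x9630222211444125}), so I would also verify that on all of $(\tfrac{1737}{582},3)$ both fractions have positive numerator and denominator, making the expression well-defined and the monotonicity argument valid. The main obstacle is purely bookkeeping: keeping the many rational exponents straight and confirming the sign of each numerator and denominator, since a single slip (for instance in whether $\tfrac16\text{-cleared}$ numerators like $5\cdot41-12$ versus $7\cdot41-96$ are positive) changes the direction of an inequality; there is no conceptual difficulty, only the need for careful exact arithmetic with the fractions $\tfrac{25}{16}$, $\tfrac{8}{7}$, $\tfrac{41}{16}$, $\tfrac{41}{14}$, and the chosen value of $\tilde l_0$ near $3$.
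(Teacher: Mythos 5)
Your overall strategy — treat the left side as $A+B(\tilde l_0)$ with $A$ a constant and $B$ monotone in $\tilde l_0$, then evaluate near $\tilde l_0=3$ — does work, but two concrete sign issues in the write-up would derail you if you actually carried it out, and one of them is the heart of the matter.

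First, $A=\frac{193}{191}$ is \emph{not} an arithmetic slip: with $p+1=\tfrac{41}{16}$ and $\theta'(p+1)=\tfrac{41}{2}$ one has
$A=\dfrac{\tfrac56-\tfrac{2}{41}}{\tfrac76-\tfrac{16}{41}}=\dfrac{193/246}{191/246}=\dfrac{193}{191}>1$,
so do not go back and ``recheck the arithmetic''; accept it. Second, and consequently, your proposed sanity check that ``on all of $(\tfrac{1737}{582},3)$ both fractions have positive numerator and denominator'' is false and would leave you stuck: since $\theta(p+1)=\tfrac{41}{14}\approx 2.929$ and $\tilde l_0\in(\tfrac{1737}{582},3)\approx(2.985,3)$, we have $\tilde l_0>\theta(p+1)$, so the numerator $\tfrac{1}{\tilde l_0}-\tfrac{1}{\theta(p+1)}$ of the second fraction is \emph{negative} throughout the interval. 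That is not a defect — it is essential: because $A>1$, the inequality can only hold if $B(\tilde l_0)<0$, and indeed $B<-\tfrac{2}{191}$ is what is needed. Completing your plan: at $\tilde l_0=3$,
\[
B(3)=\frac{\tfrac13-\tfrac{14}{41}}{1-\tfrac{16}{41}}=\frac{-\tfrac{1}{123}}{\tfrac{25}{41}}=-\frac{1}{75},
\]
and $\tfrac{193}{191}-\tfrac{1}{75}=\tfrac{14284}{14325}<1$ since $\tfrac{1}{75}>\tfrac{2}{191}$ (cross-multiply: $191>150$). Your monotonicity observation (with $s=1/\tilde l_0$, $B=\tfrac{s-a}{s-b}$, $a=\tfrac{14}{41}$, $b=-\tfrac{34}{123}$, so $a>b$ and $B$ is increasing in $s$, i.e.\ decreasing in $\tilde l_0$) then shows $A+B<1$ exactly for $\tilde l_0$ beyond the threshold where $A+B=1$; solving $A+B=1$ for $s$ gives $s=\tfrac{194}{579}$, i.e.\ $\tilde l_0=\tfrac{579}{194}=\tfrac{1737}{582}$, which is precisely the stated lower endpoint. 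So the interval $(\tfrac{1737}{582},3)$ is exactly where the inequality holds.

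For comparison, the paper's proof is shorter but less transparent: it uses the identity $\tfrac{1}{p+1}-\tfrac{1}{\theta(p+1)}=\tfrac{1}{\theta'(p+1)}$ to rewrite $1-B(\tilde l_0)=\dfrac{\tfrac23-\tfrac{1}{\theta'(p+1)}}{\tfrac{1}{\tilde l_0}+\tfrac23-\tfrac{1}{p+1}}$, reducing the claim to $A<1-B$, and then asserts the rest is ``basic calculation.'' Your route via evaluation at $\tilde l_0\to 3^-$ plus monotonicity is equally valid and, once the sign of the second numerator is corrected, arguably more self-contained. One last remark worth keeping in mind (though strictly outside the lemma itself): the fact that $a=A=\tfrac{193}{191}>1$ means this quantity cannot literally be the Gagliardo--Nirenberg exponent $a\in(0,1)$ that the later lemma assigns to it, so while the arithmetic lemma as stated is true, its downstream use deserves scrutiny.
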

\begin{proof}
It can the readily be verified that 
 $$\frac{1}{p+1}=\frac{16}{41}$$
and
$$1-\frac{\frac{1}{\tilde{l}_0}-\frac{1}{\theta (p+1)}}{\frac{1}{\tilde{l}_0}+\frac{2}{3}-\frac{1}{p+1}}=\frac{\frac{2}{3}-\frac{1}{\theta' (p+1)}}{\frac{1}{\tilde{l}_0}+\frac{2}{3}-\frac{1}{p+1}},$$
due to our assumption $p=\frac{25}{16}$ and $\theta=\frac{8}{7}$.
These together with some basic calculation yield to \dref{zjscz2.5297x9630222211444125}.
\end{proof}

The following estimates are crucial to prove our main results, which are based on the
Maximal Sobolev regularity (see Hieber and Pr\"{u}ss \cite{Hieber}).
%
\begin{lemma}\label{lemma45566645630223}
If
  \begin{equation}\label{gddffffnjjmmx1.731426677gg}
m> \frac{4}{3},
\begin{array}{ll}\\
 \end{array}
\end{equation}
then there exists a positive constant $p_0>\frac{3}{2}$   independent of $\varepsilon$
such that 
 the solution of \dref{1.1} from Lemma \ref{lemma70} satisfies
\begin{equation}
\int_{\Omega}n^{p_0}_\varepsilon(x,t)dx\leq C ~~~\mbox{for all}~~ t\in(0,T_{max}).
\label{334444zjscz2.5297x96302222114}
\end{equation}
\end{lemma}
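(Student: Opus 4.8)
\textbf{Proof proposal for Lemma \ref{lemma45566645630223}.}

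The plan is to combine the space--time estimates of Lemma \ref{lemmaghjssddgghhmk4563025xxhjklojjkkk} with the maximal Sobolev regularity of Lemma \ref{lemma45xy1222232} in order to bootstrap integrability of $n_\varepsilon$, and then run a testing procedure on a suitable power of $(n_\varepsilon+\varepsilon)$ to close the estimate. First I would test the first equation of \dref{1.1fghyuisda} with $(n_\varepsilon+\varepsilon)^{p-1}$ for the exponent $p=\frac{25}{16}$ singled out in Lemma \ref{lemma45630223116}; this produces, after integration by parts and using the divergence-free condition to kill the convective term, the differential inequality
\begin{equation}
\frac{d}{dt}\int_\Omega (n_\varepsilon+\varepsilon)^p + c_1\int_\Omega (n_\varepsilon+\varepsilon)^{p+m-3}|\nabla n_\varepsilon|^2 \leq c_2\int_\Omega (n_\varepsilon+\varepsilon)^{p-1} n_\varepsilon |\nabla c_\varepsilon|\cdot|\nabla n_\varepsilon|^{0},
\end{equation}
where the chemotactic contribution is handled by absorbing a fraction of the gradient term and leaving a remainder controlled by $\int_\Omega (n_\varepsilon+\varepsilon)^{p+1-m}|\nabla c_\varepsilon|^2$ or, after a further integration by parts, by $\int_\Omega (n_\varepsilon+\varepsilon)^{p+1}$ together with a term involving $\Delta c_\varepsilon$. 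The gradient term on the left, rewritten as $\nabla (n_\varepsilon+\varepsilon)^{(p+m-1)/2}$ up to constants, then feeds a Gagliardo--Nirenberg interpolation that expresses $\|(n_\varepsilon+\varepsilon)^{(p+m-1)/2}\|_{L^q}$ in terms of its gradient and a lower-order norm controlled by the mass bound of Lemma \ref{fvfgfflemma45}.

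The crucial quantitative input is the maximal regularity estimate \dref{cz2.5bbv} applied to $c_\varepsilon$: from \dref{bnmbncz2.5ghhjuyuivvbnnihjj} one already controls $\nabla c_\varepsilon$ in $L^2(L^2)$, and from \dref{czfvgb2.5ghhjuyuccvviihjj} one has $c_\varepsilon$ bounded in $L^\infty(L^2)$; combining these with the second equation $c_{\varepsilon t}-\Delta c_\varepsilon + c_\varepsilon = n_\varepsilon - u_\varepsilon\cdot\nabla c_\varepsilon$ and Lemma \ref{lemma45xy1222232} gives, for an appropriate exponent $\theta=\frac{8}{7}$, a bound on $\Delta c_\varepsilon$ in $L^\theta(L^\theta)$ in terms of $\|n_\varepsilon\|_{L^\theta(L^\theta)}$ plus controlled quantities. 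The role of Lemma \ref{lemma45630223116} is precisely to certify that the exponents $p=\frac{25}{16}$, $\theta=\frac{8}{7}$ and some $\tilde l_0\in(\frac{1737}{582},3)$ satisfy the admissibility inequality \dref{zjscz2.5297x9630222211444125}, which is exactly the condition guaranteeing that when one substitutes the Gagliardo--Nirenberg interpolations for $\|(n_\varepsilon+\varepsilon)^{(p+m-1)/2}\|$ and for $\|\nabla c_\varepsilon\|_{L^{l_0}}$ (via Lemma \ref{lemma630jklhhjj} with $r$ corresponding to $\tilde l_0$, using the already-established integrability of $n_\varepsilon$) into the right-hand side of the testing identity, the resulting exponents add up to strictly less than one. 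That sub-unit sum lets Young's inequality absorb every dangerous term into the dissipation $\int_\Omega|\nabla (n_\varepsilon+\varepsilon)^{(p+m-1)/2}|^2$ plus a term integrable in time, whence a Grönwall-type argument on $\int_\Omega(n_\varepsilon+\varepsilon)^p$ yields a uniform-in-$\varepsilon$ bound; setting $p_0=p=\frac{25}{16}>\frac32$ then gives \dref{334444zjscz2.5297x96302222114}. The hypothesis $m>\frac43$ enters here because it makes the diffusion exponent $p+m-3$ large enough (equivalently $p+m-1>\frac{p+1}{\cdot}$ in the right place) for the interpolation exponents to land in the admissible range; for $m\le\frac43$ the sum in \dref{zjscz2.5297x9630222211444125} fails to be below one.

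The main obstacle I expect is bookkeeping the coupled exponents so that \emph{all} three interlocking interpolation inequalities — the Gagliardo--Nirenberg bound for the $n_\varepsilon$ power, the maximal-regularity transfer from $n_\varepsilon$ to $\Delta c_\varepsilon$ at exponent $\theta$, and the elliptic/parabolic gain from $c_\varepsilon$ to $\nabla c_\varepsilon$ at exponent $\tilde l_0$ — are simultaneously valid (their parameters must satisfy the dimensional constraints in \dref{3.10gghhjuulooll} and the GN admissibility ranges) \emph{and} their exponents sum below one; this is the content encapsulated in Lemma \ref{lemma45630223116}, and the delicate point is verifying that the specific rational choices $p=\frac{25}{16},\ \theta=\frac87,\ \theta'=8$ propagate correctly through the testing identity without a sign or exponent slip. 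A secondary technical nuisance is the handling of the fluid term $u_\varepsilon\cdot\nabla c_\varepsilon$ inside the maximal-regularity estimate: one controls it using $\|u_\varepsilon\|_{L^\infty(L^2)}$ from \dref{czfvgb2.5ghhjuyuccvviihjj} together with $\|\nabla c_\varepsilon\|$ bounds, possibly at the cost of introducing yet another interpolation, but since $u_\varepsilon$ satisfies the Stokes system with forcing $n_\varepsilon\nabla\phi$ and $\phi\in W^{1,\infty}$, its regularity is slaved to that of $n_\varepsilon$ and does not degrade the exponent count. Once these pieces are assembled the conclusion is a routine, if lengthy, Moser-type / Grönwall closure.
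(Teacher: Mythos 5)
Your overall strategy --- test the first equation with $(n_\varepsilon+\varepsilon)^{p-1}$ at $p=\frac{25}{16}$, integrate by parts to surface $\Delta c_\varepsilon$, bound it via maximal Sobolev regularity, split $u_\varepsilon\cdot\nabla c_\varepsilon$ via H\"older and Gagliardo--Nirenberg with admissibility certified by Lemma \ref{lemma45630223116}, and close by absorption --- is indeed the paper's plan, but several exponent assignments are muddled in ways that would prevent a direct implementation.

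First, the maximal Sobolev regularity for $c_\varepsilon$ must be invoked at exponent $p+1=\frac{41}{16}$, not at $\theta=\frac{8}{7}$: the testing identity, after integration by parts and Young, produces $\int|\Delta c_\varepsilon|^{p+1}$, and a maximal-regularity bound at $L^\theta(L^\theta)$ with $\theta<p+1$ gives no control over it. The pair $(\theta,\theta')=(\tfrac{8}{7},8)$ is a H\"older conjugate pair used to split $\|u_\varepsilon\cdot\nabla c_\varepsilon\|_{L^{p+1}}\le\|u_\varepsilon\|_{L^{\theta(p+1)}}\|\nabla c_\varepsilon\|_{L^{\theta'(p+1)}}$, after which each factor is interpolated separately. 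Second, you attribute the hypothesis $m>\frac43$ to the inequality \dref{zjscz2.5297x9630222211444125}, but that inequality contains no $m$ at all --- it is a purely arithmetic constraint on $p,\theta,\tilde l_0$; the role of $m>\frac43$ is rather to ensure $p+1<p+m-\frac13$, so that every $(n_\varepsilon+\varepsilon)^{p+1}$ term generated along the way can be absorbed via Young into $\int(n_\varepsilon+\varepsilon)^{p+m-\frac13}$, which the Gagliardo--Nirenberg step controls by the dissipation. Third, the claim that the regularity of $u_\varepsilon$ ``does not degrade the exponent count'' skips a genuinely necessary second maximal-regularity step: one needs the time-weighted Stokes estimate $\int e^{(p+1)s}\|Au_\varepsilon\|_{L^{p+1}}^{p+1}\,ds\lesssim\int e^{(p+1)s}\|n_\varepsilon\|_{L^{p+1}}^{p+1}\,ds+\ldots$, together with $\|u_\varepsilon\|_{L^{\tilde l_0}}$ bounded for $\tilde l_0\in(\tfrac{1737}{582},3)$ from Lemma \ref{lemma630jklhhjj} applied at $r=1$ (mass bound) plus Sobolev embedding; your invocation of the $L^\infty(L^2)$ bound from \dref{czfvgb2.5ghhjuyuccvviihjj} is too weak, because $\tilde l_0=2$ does not satisfy the constraint in Lemma \ref{lemma45630223116}. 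Without the $L^{p+1}$ maximal regularity for both $c_\varepsilon$ and $u_\varepsilon$, the chain of interpolations does not close.
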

\begin{proof}
Let
$p=\frac{25}{16}$.
%
Multiplying the first equation of \dref{1.1} by ${(n_{\varepsilon}+\varepsilon)^{p-1}}$ and using $\nabla\cdot u_\varepsilon=0$, we derive that

\begin{equation}
\begin{array}{rl}
&\disp{\frac{1}{{p}}\frac{d}{dt}\|n_\varepsilon+\varepsilon\|^{{p}}_{L^{{p}}(\Omega)}+m({{p}-1})\int_{\Omega}(n_\varepsilon+\varepsilon)^{{m+{p}-3}}|\nabla n_\varepsilon|^2}
\\
=&\disp{-\int_\Omega (n_\varepsilon+\varepsilon)^{p-1}\nabla\cdot(n_\varepsilon
\nabla c_\varepsilon) }\\
=&\disp{(p-1)\int_\Omega  (n_\varepsilon+\varepsilon)^{p-2} n_\varepsilon
\nabla n_\varepsilon\cdot\nabla c_\varepsilon~~\mbox{for all}~~ t\in(0,T_{max}),}\\
\end{array}
\label{3333cz2.5114114}
\end{equation}
which yields to 
\begin{equation}
\begin{array}{rl}
&\disp{\frac{1}{{p}}\frac{d}{dt}\|n_\varepsilon+\varepsilon\|^{{{p}}}_{L^{{p}}(\Omega)}+m({{p}-1})\int_{\Omega}(n_\varepsilon+\varepsilon)^{{m+{p}-3}}|\nabla n_\varepsilon|^2}
\\
\leq&\disp{-\frac{{p}+1}{{p}}\int_{\Omega} (n_\varepsilon+\varepsilon)^{p} +(p-1)\int_\Omega  (n_\varepsilon+\varepsilon)^{p-2} n_\varepsilon
\nabla n_\varepsilon\cdot\nabla c_\varepsilon
   + \frac{{p}+1}{{p}}\int_\Omega (n_\varepsilon+\varepsilon)^{p}}\\
\end{array}
\label{3333cz2.5kk1214114114}
\end{equation}
for all $t\in(0,T_{max}).$

Here, according to  the Young inequality,  it reads that
%
%
\begin{equation}
\begin{array}{rl}
&\disp{\frac{{p}+1}{{p}}\int_\Omega   (n_\varepsilon+\varepsilon)^p \leq\varepsilon_1\int_\Omega  n^{{{p}+m-\frac{1}{3}}} +C_1(\varepsilon_1,p),}
\end{array}
\label{3333cz2.563011228ddff}
\end{equation}
where  
$$C_1(\varepsilon_1,{p})=\frac{m-\frac{1}{3}}{{p}+m-\frac{1}{3}}\left(\varepsilon_1\frac{{p}+m-\frac{1}{3}}{{p}}\right)^{-\frac{p}{m-\frac{1}{3}} }
\left(\frac{{p}+1}{{p}}\right)^{\frac{{p}+m-\frac{1}{3}}{m-\frac{1}{3}} }|\Omega|.$$
%
%

 Once more integrating by parts, combine  with \dref{x1.73142vghf48gg},  we also find
that
\begin{equation}
\begin{array}{rl}
&\disp{(p-1)\int_\Omega  (n_\varepsilon+\varepsilon)^{p-2} n_\varepsilon
\nabla n_\varepsilon\cdot\nabla c_\varepsilon }
\\
=&\disp{(p-1)\int_\Omega \nabla \int_0^{n_\varepsilon}(\tau+\varepsilon)^{p-2}  \tau d\tau\cdot\nabla c_\varepsilon  }
\\
=&\disp{-(p-1)\int_\Omega \int_0^{n_\varepsilon}(\tau+\varepsilon)^{p-2}  \tau d\tau \Delta c_\varepsilon }
\\
\leq&\disp{\frac{({{p}-1})}{p}\int_\Omega  (n_\varepsilon+\varepsilon)^{p}|\Delta c_\varepsilon| .}
\\
\end{array}
\label{3333c334444z2.563019114}
\end{equation}
Utilizing the Young inequality to the term on the right side of \dref{3333c334444z2.563019114}
leads to
\begin{equation}
\begin{array}{rl}
&\disp{\frac{({{p}-1})}{p}\int_\Omega  (n_\varepsilon+\varepsilon)^{p}|\Delta c_\varepsilon|}
\\
\leq&\disp{\int_\Omega  (n_\varepsilon+\varepsilon)^{{p}+1}+\frac{1}{ { {p}+1}}\left[\frac{ { {p}+1}}{ p}\right]^{- p}\left(\frac{({{p}-1})}{p} \right)^{ { {p}+1}}\int_\Omega |\Delta c_\varepsilon|^{ { {p}+1}} }
\\
=&\disp{\int_\Omega  (n_\varepsilon+\varepsilon)^{{p}+1}+{A}_1\int_\Omega |\Delta c_\varepsilon|^{ { {p}+1}} ,}
\\
\end{array}
\label{3333cz2.563019114gghh}
\end{equation}
where
$$A_1:=\frac{1}{ { {p}+1}}\left[\frac{ { {p}+1}}{ p}\right]^{- p}\left(\frac{({{p}-1})}{p} \right)^{ { {p}+1}}.$$
Hence \dref{3333cz2.5kk1214114114}, \dref{3333cz2.563011228ddff} and \dref{3333cz2.563019114gghh} 
results in
\begin{equation*}
\begin{array}{rl}
&\disp\frac{1}{{p}}\disp\frac{d}{dt}\|n_\varepsilon+\varepsilon\|^{{{p}}}_{L^{{p}}(\Omega)}+m(p-1)\int_{\Omega} (n_\varepsilon+\varepsilon)^{{m+{p}-3}}|\nabla n_\varepsilon|^2\\
\leq&\disp{\varepsilon_1\int_\Omega  (n_\varepsilon+\varepsilon)^{{{p}+m-\frac{1}{3}}} +\int_\Omega (n_\varepsilon+\varepsilon)^{{{p}+1}}-\frac{{p}+1}{{p}}\int_{\Omega} (n_\varepsilon+\varepsilon)^{p} }\\
&+\disp{{A}_1\int_\Omega |\Delta c_\varepsilon|^{ { {p}+1}} +
C_1(\varepsilon_1,{p})~~\mbox{for all}~~ t\in(0,T_{max}).}\\
\end{array}
\end{equation*}
Since, $m>\frac{4}{3}$, yields to ${{p}+1}<{{p}+m-\frac{1}{3}},$ therefore, by the Young inequality, we conclude that
\begin{equation}\label{3223444333c334444z2.563019114}
\begin{array}{rl}
&\disp\frac{1}{{p}}\disp\frac{d}{dt}\|n_\varepsilon+\varepsilon\|^{{{p}}}_{L^{{p}}(\Omega)}+\frac{4m(p-1)}{(m+p-1)^2}\|\nabla   (n_\varepsilon+\varepsilon)^{\frac{m+p-1}{2}}\|_{L^2(\Omega)}^{2}\\
\leq&\disp{2\varepsilon_1\int_\Omega  (n_\varepsilon+\varepsilon)^{{{p}+m-\frac{1}{3}}} -\frac{{p}+1}{{p}}\int_{\Omega} (n_\varepsilon+\varepsilon)^{p} }\\
&+\disp{{A}_1\int_\Omega |\Delta c_\varepsilon|^{ { {p}+1}} +
C_2(\varepsilon_1,{p}),}\\
\end{array}
\end{equation}
where
$$C_2(\varepsilon_1,{p})=\frac{m-\frac{4}{3}}{{p}+m-\frac{1}{3}}\left(\varepsilon_1\frac{{p}+m-\frac{1}{3}}{{p}+1}\right)^{-\frac{p+1}
{m-\frac{4}{3}} }
\left(\frac{{p}+1}{{p}}\right)^{\frac{{p}+m-\frac{1}{3}}{m-\frac{4}{3}} }|\Omega|.$$
On the other hand, by the Gagliardo--Nirenberg inequality and \dref{ddfgczhhhh2.5ghju48cfg924ghyuji}, one can get there exist positive constants  $\lambda_0$ and $\lambda_1$ such that
\begin{equation}
\begin{array}{rl}
&\disp\int_{\Omega}(n_\varepsilon+\varepsilon)^{p+m-\frac{1}{3}}\\
=&\disp{\| (n_\varepsilon+\varepsilon)^{\frac{m+p-1}{2}}\|^{\frac{2(p+m-\frac{1}{3})}{m+p-1}}_{L^{\frac{2(p+m-\frac{1}{3})}{m+p-1}}(\Omega)}}\\
\leq&\disp{\lambda_{0}(\|\nabla   (n_\varepsilon+\varepsilon)^{\frac{m+p-1}{2}}\|_{L^2(\Omega)}^{\frac{m+p-1}{p+m-\frac{1}{3}}}\|  (n_\varepsilon+\varepsilon)^{\frac{m+p-1}{2}}\|_{L^\frac{2}{m+p-1 }(\Omega)}^{1-\frac{m+p-1}{p+m-\frac{1}{3}}}+\|  (n_\varepsilon+\varepsilon)^{\frac{m+p-1}{2}}\|_{L^\frac{2}{m+p-1 }(\Omega)})^{\frac{2(p+m-\frac{1}{3})}{m+p-1}}}\\
\leq&\disp{\lambda_{1}(\|\nabla   (n_\varepsilon+\varepsilon)^{\frac{m+p-1}{2}}\|_{L^2(\Omega)}^{2}+1).}\\
\end{array}
\label{123cz2.57151hhddfffkkhhhjddffffgukildrftjj}
\end{equation}
In combination with \dref{3223444333c334444z2.563019114} and \dref{123cz2.57151hhddfffkkhhhjddffffgukildrftjj}, this shows that
\begin{equation}\label{3223444333c334444zsddfff2.563019114}
\begin{array}{rl}
\disp\frac{1}{{p}}\disp\frac{d}{dt}\|n_\varepsilon+\varepsilon\|^{{{p}}}_{L^{{p}}(\Omega)}\leq&\disp{(2\varepsilon_1-\frac{4m(p-1)}{(m+p-1)^2}\frac{1}{\lambda_1})\int_\Omega  (n_\varepsilon+\varepsilon)^{{{p}+m-\frac{1}{3}}} -\frac{{p}+1}{{p}}\int_{\Omega} (n_\varepsilon+\varepsilon)^{p} }\\
&+\disp{{A}_1\int_\Omega |\Delta c_\varepsilon|^{ { {p}+1}} +
C_3(\varepsilon_1,{p})~~\mbox{for all}~~ t\in(0,T_{max}),}\\
\end{array}
\end{equation}
where $$C_3(\varepsilon_1,{p})=C_2(\varepsilon_1,{p})+\frac{4m(p-1)}{(m+p-1)^2}.$$
For any $t\in (s_0,T_{max})$,
employing the variation-of-constants formula to \dref{3223444333c334444zsddfff2.563019114} and using $\varepsilon<1$, we obtain
\begin{equation}
\begin{array}{rl}
&\disp{\frac{1}{{p}}\|n_\varepsilon(t)+\varepsilon \|^{{{p}}}_{L^{{p}}(\Omega)}}
\\
\leq&\disp{\frac{1}{{p}}e^{-({p}+1)(t-s_0)}\|n_\varepsilon(s_0)+1 \|^{{{p}}}_{L^{{p}}(\Omega)}+(2\varepsilon_1-\frac{4m(p-1)}{(m+p-1)^2}\frac{1}{\lambda_1})\int_{s_0}^t
e^{-({p}+1)(t-s)}\int_\Omega (n_\varepsilon+\varepsilon)^{{{p}+m-\frac{1}{3}}} ds}\\
&+\disp{{A}_1\int_{s_0}^t
e^{-({p}+1)(t-s)}\int_\Omega |\Delta c_\varepsilon|^{ {p}+1} dxds+ C_3(\varepsilon_1,{p})\int_{s_0}^t
e^{-({p}+1)(t-s)}ds}\\
\leq&\disp{(2\varepsilon_1-\frac{4m(p-1)}{(m+p-1)^2}\frac{1}{\lambda_1}   )\int_{s_0}^t
e^{-({p}+1)(t-s)}\int_\Omega (n_\varepsilon+\varepsilon)^{{{p}+m-\frac{1}{3}}} ds+{A}_1\int_{s_0}^t
e^{-({p}+1)(t-s)}\int_\Omega |\Delta c_\varepsilon|^{ {p}+1} dxds}\\
&+\disp{C_4(\varepsilon_1,{p})}\\
\end{array}
\label{3333cz2.5kk1214114114rrgg}
\end{equation}
with
$$
\begin{array}{rl}
C_4:=C_4(\varepsilon_1,{p})=&\disp\frac{1}{{p}}e^{-({p}+1)(t-s_0)}\|n_\varepsilon(s_0)+1 \|^{{{p}}}_{L^{{p}}(\Omega)}+
 C_3(\varepsilon_1,{p})\int_{s_0}^t
e^{-({p}+1)(t-s)}ds.\\
\end{array}
$$
Due to \dref{ddfgczhhhh2.5ghju48cfg924ghyuji}, in view of Lemma \ref{lemma630jklhhjj}, we derive that  
\begin{equation}\|  Du_\varepsilon(\cdot, t)\|_{L^l(\Omega)}\leq C_5~~ \mbox{for all}~~ t\in(0, T_{max})~~ \mbox{and for any}~~~l<\frac{3}{2}.
\label{3.10gghhjukklllkklllooppuloollgghhhyhh}
\end{equation}
Here employing the three-dimensional Sobolev inequality, we can find
\begin{equation}\|  u_\varepsilon(\cdot, t)\|_{L^{l_0}(\Omega)}\leq C_6~~ \mbox{for all}~~ t\in(0, T_{max})~~ \mbox{and for any}~~~l_0<3.
\label{3.10gghhjukklllkkllloffghhjjoppuloollgghhhyhh}
\end{equation}
Now, due to
Lemma  \ref{lemma45xy1222232} 
and the second equation of \dref{1.1} and using the H\"{o}lder inequality, we have
\begin{equation}\label{3333cz2.5kke34567789999001214114114rrggjjkk}
\begin{array}{rl}
&\disp{{A}_1\int_{s_0}^t
e^{-({p}+1)(t-s)}\int_\Omega |\Delta c_\varepsilon|^{ {p}+1} ds}
\\
=&\disp{{A}_1e^{-({p}+1)t}\int_{s_0}^t
e^{({p}+1)s}\int_\Omega |\Delta c_\varepsilon|^{ {p}+1} ds}\\
\leq&\disp{2^{{p}+1}{A}_1e^{-({p}+1)t}C_ {{p}+1 }\left[\int_{s_0}^t
\int_\Omega e^{({p}+1)s}(|u_\varepsilon \cdot\nabla c_\varepsilon|^ {{p}+1 }+n^ {{p}+1 }_\varepsilon) ds+e^{({p}+1)s_0}\|c_\varepsilon(s_0,t)\|^ {{p}+1 }_{W^{2,  {{p}+1 }}}\right]}\\
\leq&\disp{2^{{p}+1}{A}_1e^{-({p}+1)t}C_ {{p}+1 }\int_{s_0}^t
 e^{({p}+1)s}(\|u_\varepsilon \|_{L^{\theta(p+1)}(\Omega)}^{p+1}\|\nabla c_\varepsilon\|_{L^{\theta'(p+1)}(\Omega)}^{p+1}+n^ {{p}+1 }_\varepsilon) ds+C_7}\\
\end{array}
\end{equation}
for all $t\in(s_0, T_{max})$,
where $\theta=\frac{8}{7},\theta'=\frac{\theta}{\theta-1}=8$,  $C_7={A}_1e^{-({p}+1)t}C_{ {p}+1}2^{{p}+1}e^{({p}+1)s_0}\|c_\varepsilon(s_0,t)\|^ {{p}+1 }_{W^{2,  {{p}+1 }}}.$
Next, an application of the Gagliardo--Nirenberg inequality and \dref{czfvgb2.5ghhjuyuccvviihjj}
infers that
\begin{equation}\label{3333cz2.5kkett677734567789999001214114114rrggjjkk}
\begin{array}{rl}
&\disp{\|\nabla c_\varepsilon\|_{L^{\theta'(p+1)}(\Omega)}^{p+1}}
\\
\leq&\disp{C_8\|\Delta c_\varepsilon\|_{L^{(p+1)}(\Omega)}^{a(p+1)}\| c_\varepsilon\|_{L^{2}(\Omega)}^{(1-a)(p+1)}+C_8\| c_\varepsilon\|_{L^{2}(\Omega)}^{p+1}}\\
\leq&\disp{C_9\|\Delta c_\varepsilon\|_{L^{(p+1)}(\Omega)}^{a(p+1)}+C_9}\\
\end{array}
\end{equation}
with some constants $C_8>0 $ and $C_9  > 0$, where
$$a=\frac{\frac{5}{6}-\frac{1}{\theta'(p+1)}}{\frac{7}{6}-\frac{1}{p+1}}\in(0,1).$$
We derive from the Young inequality that for any $\delta\in(0,1)$, 
\begin{equation}\label{3333cz2.5kkett677734gghhh567789999001214114114rrggjjkk}\begin{array}{rl}
&\disp{\|u_\varepsilon\|_{L^{\theta(p+1)}(\Omega)}^{p+1}\|\nabla c_\varepsilon\|_{L^{\theta'(p+1)}(\Omega)}^{p+1}}\\
\leq&\disp{C_9\|\Delta c_\varepsilon\|_{L^{(p+1)}(\Omega)}^{a(p+1)}\|u_\varepsilon\|_{L^{\theta (p+1)}(\Omega)}^{p+1}+C_9\|u_\varepsilon\|_{L^{\theta (p+1)}(\Omega)}^{p+1}}\\
\leq&\disp{\delta\|\Delta c_\varepsilon\|_{L^{(p+1)}(\Omega)}^{p+1}+C_{10}\|u_\varepsilon\|_{L^{\theta (p+1)}(\Omega)}^{\frac{p+1}{1-a}}+
C_{9}\|u_\varepsilon\|_{L^{\theta (p+1)}(\Omega)}^{p+1},}\\
\end{array}
\end{equation}
where $C_{10}=(1-a)\left(\delta\times\frac{1}{a}\right)^{-\frac{a}{1-a}}C_9^{\frac{1}{1-a}}.$

Substituting \dref{3333cz2.5kkett677734gghhh567789999001214114114rrggjjkk} into  \dref{3333cz2.5kke34567789999001214114114rrggjjkk} yields that
\begin{equation}\label{3333cz2ddfgggg.5kke345677ddfff89999001214114114rrggjjkk}
\begin{array}{rl}
&\disp{{A}_1\int_{s_0}^t
e^{-({p}+1)(t-s)}\int_\Omega |\Delta c_\varepsilon|^{ {p}+1} ds}
\\
\leq&\disp{2^{{p}+1}{A}_1e^{-({p}+1)t}C_ {{p}+1 }\delta\int_{s_0}^t
 e^{({p}+1)s}\|\Delta c_\varepsilon\|_{L^{(p+1)}(\Omega)}^{p+1} ds}\\
&+\disp{2^{{p}+1}{A}_1e^{-({p}+1)t}C_ {{p}+1 }\int_{s_0}^t
 e^{({p}+1)s}[C_{10}\|u_\varepsilon\|_{L^{\theta (p+1)}(\Omega)}^{\frac{p+1}{1-a}}+
C_{9}\|u_\varepsilon\|_{L^{\theta (p+1)}(\Omega)}^{p+1}] ds}\\
&\disp{+2^{{p}+1}{A}_1e^{-({p}+1)t}C_ {{p}+1 }\int_{s_0}^t
 e^{({p}+1)s}n^ {{p}+1 }_\varepsilon  ds+C_7}\\
\end{array}
\end{equation}
for all $t\in(s_0, T_{max})$.
Therefore, choosing $\delta=\frac{1}{2}\frac{1}{2^{{p}+1}C_ {{p}+1 }}$ yields to
\begin{equation}\label{3333cz2.5kke345677ddfff89999001214114114rrggjjkk}
\begin{array}{rl}
&\disp{{A}_1\int_{s_0}^t
e^{-({p}+1)(t-s)}\int_\Omega |\Delta c_\varepsilon|^{ {p}+1} ds}
\\
\leq&\disp{2^{{p}+2}{A}_1e^{-({p}+1)t}C_ {{p}+1 }C_{10}\int_{s_0}^t
 e^{({p}+1)s}\|u_\varepsilon\|_{L^{\theta (p+1)}(\Omega)}^{\frac{p+1}{1-a}}ds}\\
&+\disp{2^{{p}+2}{A}_1e^{-({p}+1)t}C_ {{p}+1 }C_{9}\int_{s_0}^t
\|u_\varepsilon\|_{L^{\theta (p+1)}(\Omega)}^{p+1} ds}\\
&\disp{+2^{{p}+2}{A}_1e^{-({p}+1)t}C_ {{p}+1 }\int_{s_0}^t
 e^{({p}+1)s}n^ {{p}+1 }_\varepsilon  ds+2C_7.}\\
\end{array}
\end{equation}
On the other hand, by Lemma \ref{lemma45630223116}, we may choose $\frac{1737}{582}<\tilde{l}_0<3$  such that
\begin{equation}\label{3333llllllcz2.5kke345677ddfff89999001214114114rrggjjkk}\frac{\frac{5}{6}-\frac{1}{\theta'(p+1)}}{\frac{7}{6}-\frac{1}{p+1}}+\frac{\frac{1}{\tilde{l}_0}-\frac{1}{\theta (p+1)}}{\frac{1}{\tilde{l}_0}+\frac{2}{3}-\frac{1}{p+1}}<1.
\end{equation}
Therefore,  it follows from the Gagliardo--Nirenberg inequality, \dref{3.10gghhjukklllkkllloffghhjjoppuloollgghhhyhh} and the Young
inequality that there exist  constants $C_{11} = C_{11}(p)> 0$ and $C_{12} = C_{12}(p)> 0$ such that
\begin{equation}\label{3333cz2.5kke345677ddff89001214114114rrggjjkk}
\begin{array}{rl}\|u_\varepsilon\|_{L^{\theta (p+1)}(\Omega)}^{\frac{p+1}{1-a}}\leq& \| Au_\varepsilon\|_{L^{p+1}(\Omega)}^{\frac{p+1}{1-a}\tilde{a}}\| u_\varepsilon\|_{L^{\tilde{l}_0}(\Omega)}^{\frac{p+1}{1-a}(1-\tilde{a})}\\
\leq& \| Au_\varepsilon\|_{L^{p+1}(\Omega)}^{\frac{p+1}{1-a}\tilde{a}}C_{11}\\
\leq& \| Au_\varepsilon\|_{L^{p+1}(\Omega)}^{p+1}+C_{12}\\
\end{array}
\end{equation}
with 
$$\tilde{a}=\frac{\frac{1}{\tilde{l}_0}-\frac{1}{\theta (p+1)}}{\frac{1}{\tilde{l}_0}+\frac{2}{3}-\frac{1}{p+1}}\in(0,1).$$
Here we have use the fact that
$\frac{p+1}{1-a}\tilde{a}=
(p+1)\frac{\frac{7}{6}-\frac{1}{p+1}}{\frac{1}{3}-\frac{1}{\theta (p+1)}}\frac{\frac{1}{\tilde{l}_0}-\frac{1}{\theta (p+1)}}
{\frac{1}{\tilde{l}_0}+\frac{2}{3}-\frac{1}{p+1}}<p+1$ by \dref{3333llllllcz2.5kke345677ddfff89999001214114114rrggjjkk}.
In light  of $\frac{1}{1-a}>1$,
similarly, we derive that
\begin{equation}\label{3333czffgyyu2.5kke345677ddff89001214114114rrggjjkk}
\begin{array}{rl}\|u_\varepsilon\|_{L^{\theta (p+1)}(\Omega)}^{p+1}\leq&  \| Au_\varepsilon\|_{L^{p+1}(\Omega)}^{p+1}+C_{13}.\\
\end{array}
\end{equation}
Then along with  \dref{3333cz2.5kke345677ddfff89999001214114114rrggjjkk}, \dref{3333cz2.5kke345677ddff89001214114114rrggjjkk} and \dref{3333czffgyyu2.5kke345677ddff89001214114114rrggjjkk}, we have
\begin{equation}\label{3333cz2.5kke345677ddffdfrrtyhhhjjf89999001214114114rrggjjkk}
\begin{array}{rl}
&\disp{{A}_1\int_{s_0}^t
e^{-({p}+1)(t-s)}\int_\Omega |\Delta c_\varepsilon|^{ {p}+1} ds}
\\
\leq&\disp{2^{{p}+2}{A}_1e^{-({p}+1)t}C_ {{p}+1 }C_{10}\int_{s_0}^t
 e^{({p}+1)s}[ \| Au_\varepsilon\|_{L^{p+1}(\Omega)}^{p+1}+C_{12}] ds}\\
&+\disp{2^{{p}+2}{A}_1e^{-({p}+1)t}C_ {{p}+1 }C_{9}\int_{s_0}^t
 e^{({p}+1)s}[ \| Au_\varepsilon\|_{L^{p+1}(\Omega)}^{p+1}+C_{13})] ds}\\
&\disp{+2^{{p}+2}{A}_1e^{-({p}+1)t}C_ {{p}+1 }\int_{s_0}^t
\int_\Omega e^{({p}+1)s}n^ {{p}+1 }_\varepsilon  ds+2C_7}\\
\leq&\disp{2^{{p}+2}{A}_1e^{-({p}+1)t}C_ {{p}+1 } [C_{10}+ C_{9}]\int_{s_0}^t
 e^{({p}+1)s}\| Au_\varepsilon\|_{L^{p+1}(\Omega)}^{p+1} ds}\\
&\disp{+2^{{p}+2}{A}_1e^{-({p}+1)t}C_ {{p}+1 }\int_{s_0}^t
\int_\Omega e^{({p}+1)s}n^ {{p}+1 }_\varepsilon  ds+C_{14}},\\
\end{array}
\end{equation}
where $C_{14}=2^{{p}+2}{A}_1e^{-({p}+1)t}C_ {{p}+1 }(C_{10}C_{12}+C_{9}C_{13})+2C_7.$
Putting $\tilde{u}_\varepsilon(\cdot, s) := e^su_\varepsilon(\cdot, s), s\in (s_0, t)$,
we obtain from the third  equation in \dref{1.1} that
\begin{equation}\label{33ffgh33cz2.5kke345677ddffdfrrtyhhhjjf89999001214114114rrggjjkk}\tilde{u}_{\varepsilon,s}=\Delta \tilde{u}_\varepsilon+\tilde{u}_\varepsilon+e^sn_\varepsilon\nabla \phi+e^s\nabla P_\varepsilon, \end{equation}
which derives
\begin{equation}\label{33ffgh33cz2.5kke345677ddffdfrrtyhhhjjf89999001214114114rrggjjkk}\tilde{u}_{\varepsilon,s}+ A\tilde{u}_\varepsilon=\mathcal{P}(\tilde{u}_\varepsilon+e^sn_\varepsilon\nabla \phi+e^s\nabla P_\varepsilon), \end{equation}
where $\mathcal{P}$ denotes the Helmholtz projection mapping $L^2(\Omega)$ onto its subspace $L^2_{\sigma}(\Omega)$ of all
solenoidal vector field.
Thus
by $p<2$ and \dref{3.10gghhjukklllkkllloffghhjjoppuloollgghhhyhh},
we derive from Lemma \ref{lemma45xy1222232} (see also Theorem 2.7 of \cite{Giga1215}) that there exist positive
constants $C_{15},C_{16},C_{17}$ and $C_{18}$
such that
\begin{equation}
\begin{array}{rl}
&\disp{\int_{s_0}^te^{(p+1)  s}\|A u_\varepsilon(\cdot,t)\|^{p+1}_{L^{p+1}(\Omega)}ds}\\
\leq &\disp{C_{15}\left(\int_{s_0}^te^{(p+1)s}
(\|u_\varepsilon(\cdot,s)\|^{p+1}_{L^{p+1}(\Omega)}+\|n_\varepsilon(\cdot,s)\|^{p+1}_{L^{p+1}(\Omega)})ds+e^{(p+1)  t}+1\right)}\\
\leq &\disp{C_{16}\left(\int_{s_0}^te^{(p+1)s}
(\|u_\varepsilon(\cdot,s)\|^{l_0}_{L^{p+1}(\Omega)}|\Omega|^{\frac{\tilde{l}_0-p-1}{\tilde{l}_0}}+\|n_\varepsilon(\cdot,s)\|^{p+1}_{L^{p+1}(\Omega)})ds+e^{(p+1)  t}+1\right)}\\
\leq &\disp{C_{17}\int_{s_0}^te^{(p+1)s}\|n_\varepsilon(\cdot,s)\|^{p+1}_{L^{p+1}(\Omega)}ds+(1+C_{18})e^{(p+1)  t}.}\\
\end{array}
\label{cz2.5bbhjjkkkiiooov114}
\end{equation}
By virtue of \dref{cz2.5bbhjjkkkiiooov114} and \dref{3333cz2.5kke345677ddffdfrrtyhhhjjf89999001214114114rrggjjkk}, we can see
\begin{equation}\label{3333cz2.5kke345677ddfddffgghhhjjkkffffdfrrtyhhhjjf89999001214114114rrggjjkk}
\begin{array}{rl}
&\disp{{A}_1\int_{s_0}^t
e^{-({p}+1)(t-s)}\int_\Omega |\Delta c_\varepsilon|^{ {p}+1} ds}
\\
\leq&\disp{2^{{p}+2}{A}_1e^{-({p}+1)t}C_ {{p}+1 }  [C_{10}+ C_{9}]\left(C_{17}\int_{s_0}^te^{(p+1)s}
\|n_\varepsilon(\cdot,s)\|^{p+1}_{L^{p+1}(\Omega)}ds+(1+C_{18})e^{(p+1)  t}\right)}\\
&\disp{+2^{{p}+2}{A}_1e^{-({p}+1)t}C_ {{p}+1 }\int_{s_0}^t
\int_\Omega e^{({p}+1)s}n^ {{p}+1 }_\varepsilon  ds+C_{14}}\\
\leq&\disp{  C_{19}\int_{s_0}^te^{(p+1)s}
\|n_\varepsilon(\cdot,s)\|^{p+1}_{L^{p+1}(\Omega)}ds+C_{20}},\\
\end{array}
\end{equation}
where $C_{19}=2^{{p}+2}{A}_1e^{-({p}+1)t}C_ {{p}+1 } [C_{10}+ C_{9}]C_{17}+2^{{p}+2}{A}_1e^{-({p}+1)t}C_ {{p}+1 }$ and  $$C_{20}:=C_{20}(p)=2^{{p}+2}{A}_1e^{-({p}+1)t}C_ {{p}+1 }  [C_{10}+ C_{9}](1+C_{18})e^{(p+1)  t}+C_{14}.$$
Collecting \dref{3333cz2.5kk1214114114rrgg} and  \dref{3333cz2.5kke345677ddfddffgghhhjjkkffffdfrrtyhhhjjf89999001214114114rrggjjkk}, applying Lemma \ref{lemma45630223116} and the Young inequality, we derive that
\begin{equation}
\begin{array}{rl}
&\disp{\frac{1}{{p}}\|n_\varepsilon(t) +\varepsilon\|^{{{p}}}_{L^{{p}}(\Omega)}}
\\
\leq&\disp{(2\varepsilon_1-\frac{4m(p-1)}{(m+p-1)^2}\frac{1}{\lambda_1}) \int_{s_0}^t
e^{-({p}+1)(t-s)}\int_\Omega (n_\varepsilon+\varepsilon)^{{{p}+m-\frac{1}{3}}} ds}\\
&\disp{+C_{19} \int_{s_0}^t
e^{-({p}+1)(t-s)}\int_\Omega n^{{p}+1}_\varepsilon ds+C_{21}}\\
\leq&\disp{(2\varepsilon_1-\frac{4m(p-1)}{(m+p-1)^2}\frac{1}{\lambda_1}) \int_{s_0}^t
e^{-({p}+1)(t-s)}\int_\Omega (n_\varepsilon+\varepsilon)^{{{p}+m-\frac{1}{3}}} ds}\\
&\disp{+C_{19} \int_{s_0}^t
e^{-({p}+1)(t-s)}\int_\Omega (n_\varepsilon+\varepsilon)^{{p}+1} ds+C_{21}}\\
\leq&\disp{(3\varepsilon_1-\frac{4m(p-1)}{(m+p-1)^2}\frac{1}{\lambda_1}) \int_{s_0}^t
e^{-({p}+1)(t-s)}\int_\Omega (n_\varepsilon+\varepsilon)^{{{p}+m-\frac{1}{3}}} ds+C_{22}}\\
\end{array}
\label{3333cz2.5kk121fttyuiii4114114rrgg}
\end{equation}
with $C_{21}=C_{20}+C_4(\varepsilon_1,{p})$ and $C_{22}=\frac{m-\frac{4}{3}}{{p}+m-\frac{1}{3}}\left(\varepsilon_1\frac{{p}+m-\frac{1}{3}}{{p}+1}\right)^{-\frac{p+1}{m-\frac{4}{3}} }
\left(C_{19}\right)^{\frac{{p}+m-\frac{1}{3}}{m-\frac{4}{3}} }+C_{21}$
  Thus, choosing $\varepsilon_1$ small enough
   (e.g. $\varepsilon_1<\frac{(p-1)}{(m+p-1)^2}\frac{1}{\lambda_1}$)
    in \dref{3333cz2.5kk121fttyuiii4114114rrgg}, using  \dref{eqx45xx1ddfgggg2112}, the H\"{o}lder inequality and $\varepsilon<1$, we derive that there exits a positive constant $p_0>\frac{3}{2}$ such that 
  \begin{equation}
\int_{\Omega}n^{p_0}_\varepsilon(x,t)dx\leq C_{23} ~~~\mbox{for all}~~ t\in(0,T_{max}).
\label{334444zjscz2.5297dfggggx96302222114}
\end{equation}
The proof of Lemma \ref{lemma45566645630223} is completed.
\end{proof}

Underlying the estimates established above, we can derive the following higher integrability properties by applying arguments which are essentially standard
in the analysis of the heat as well as the Stokes equations and a Moser-type iteration.
%
\begin{lemma}\label{lemma45630hhuujj}
Let $m> \frac{4}{3}$ and $\gamma$ be as in \dref{ccvvx1.731426677gg}.  Then one can find a positive constant $C$ independent of $\varepsilon$
 such that 
\begin{equation}
\|n_\varepsilon(\cdot,t)\|_{L^\infty(\Omega)}  \leq C ~~\mbox{for all}~~ t\in(0,T_{max})
\label{zjscz2.5297x9630111kk}
\end{equation}
and
\begin{equation}
\|c_\varepsilon(\cdot,t)\|_{W^{1,\infty}(\Omega)}  \leq C ~~\mbox{for all}~~ t\in(0,T_{max})
\label{zjscz2.5297x9630111kkhh}
\end{equation}
as well as
\begin{equation}
\|u_\varepsilon(\cdot,t)\|_{L^{\infty}(\Omega)}  \leq C ~~\mbox{for all}~~ t\in(0,T_{max}).
\label{zjscz2.5297x9630111kkhhffrr}
\end{equation}
Moreover, we also have
\begin{equation}
\|A^\gamma u_\varepsilon(\cdot,t)\|_{L^{2}(\Omega)}  \leq C ~~\mbox{for all}~~ t\in(0,T_{max}).
\label{zjscz2.5297x9630111kkhhffrreerr}
\end{equation}
\end{lemma}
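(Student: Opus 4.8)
The plan is to upgrade, by a bootstrap argument, the uniform $L^{p_0}(\Omega)$ bound on $n_\varepsilon$ with $p_0>\frac32$ supplied by Lemma \ref{lemma45566645630223} to a uniform $L^\infty(\Omega)$ bound, and then to read off the remaining regularity of $c_\varepsilon$, $u_\varepsilon$ and $A^\gamma u_\varepsilon$ from standard parabolic and Stokes smoothing estimates. Throughout, all constants will be tracked to be independent of $\varepsilon\in(0,1)$ (and of $t\in(0,T_{max})$), as in the estimates of Section~3.

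\textbf{Step 1 (intermediate regularity of $c_\varepsilon$).} Since $n_\varepsilon$ is bounded in $L^{p_0}(\Omega)$ with $p_0>\frac32$ uniformly in $t$ and $\varepsilon$, I would apply Lemma \ref{lemma45xy1222232} (maximal Sobolev regularity) to the second equation of \dref{1.1fghyuisda}, written as $c_{\varepsilon t}-\Delta c_\varepsilon+c_\varepsilon=n_\varepsilon-u_\varepsilon\cdot\nabla c_\varepsilon$, using the $L^l$ bound on $Du_\varepsilon$ from Lemma \ref{lemma630jklhhjj}, the $L^{l_0}$ bound \dref{3.10gghhjukklllkkllloffghhjjoppuloollgghhhyhh} on $u_\varepsilon$, and \dref{czfvgb2.5ghhjuyuccvviihjj} to control the convective forcing. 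This produces a uniform (space–time) bound for $\Delta c_\varepsilon$ in $L^{p_0}$, and then the Gagliardo--Nirenberg inequality yields a uniform bound for $\nabla c_\varepsilon$ in $L^{q_1}(\Omega)$ for some $q_1>3$, so that $\nabla\cdot(n_\varepsilon\nabla c_\varepsilon)$ may be treated henceforth as a forcing term of controlled integrability.

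\textbf{Step 2 (Moser iteration for $n_\varepsilon$).} Testing the first equation of \dref{1.1fghyuisda} against $(n_\varepsilon+\varepsilon)^{p-1}$ for arbitrarily large $p$, exactly as in \dref{3333cz2.5114114}, generates the dissipation $m(p-1)\int_\Omega(n_\varepsilon+\varepsilon)^{m+p-3}|\nabla n_\varepsilon|^2\simeq\|\nabla(n_\varepsilon+\varepsilon)^{(m+p-1)/2}\|_{L^2(\Omega)}^2$, while the chemotactic part is dominated after integration by parts by $\int_\Omega(n_\varepsilon+\varepsilon)^p|\Delta c_\varepsilon|$. Rather than invoke maximal regularity again, I would estimate the latter using the $L^{q_1}$ bound on $\nabla c_\varepsilon$ (after one more integration by parts, or directly via Hölder) together with the Gagliardo--Nirenberg inequality, absorbing the resulting gradient contribution into the dissipation; here the crucial point is that $m>\frac43$ makes the superlinear exponent $p+m-\frac13$ strictly exceed $p+1$, so the bad terms can be absorbed with $\varepsilon$-independent constants. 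Iterating along a sequence $p_k\to\infty$ gives a recursion of the form $M_{p_{k+1}}\le(C\,b^{\,k})^{1/p_k}\max\{M_{p_k},1\}$ for $M_{p_k}:=\sup_{t\in(0,T_{max})}\|n_\varepsilon(\cdot,t)\|_{L^{p_k}(\Omega)}$, whence \dref{zjscz2.5297x9630111kk} upon passing to the limit $k\to\infty$.

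\textbf{Step 3 (regularity of $c_\varepsilon$, $u_\varepsilon$, $A^\gamma u_\varepsilon$).} Once $n_\varepsilon$ is bounded in $L^\infty(\Omega)$ uniformly, \dref{zjscz2.5297x9630111kkhh} follows from a standard $W^{1,\infty}$ estimate for $c_{\varepsilon t}-\Delta c_\varepsilon+c_\varepsilon=n_\varepsilon-u_\varepsilon\cdot\nabla c_\varepsilon$ via the Neumann heat semigroup, using the $L^{q_1}$ bound on $\nabla c_\varepsilon$ and the $L^{l_0}$ bound on $u_\varepsilon$ for the convective term. For the fluid I would use the Duhamel representation $u_\varepsilon(\cdot,t)=e^{-tA}u_0+\int_0^t e^{-(t-s)A}\mathcal P\big(n_\varepsilon\nabla\phi\big)\,ds$: since $n_\varepsilon\nabla\phi$ is bounded in $L^r(\Omega)$ for every $r<\infty$ and $u_0\in D(A_r^\gamma)$ with $\gamma\in(\frac34,1)$, the smoothing estimate $\|A^\gamma e^{-\sigma A}\psi\|_{L^2}\le C\sigma^{-\gamma}\|\psi\|_{L^2}$ (applied after choosing $r$ suitably, and interpolating) yields \dref{zjscz2.5297x9630111kkhhffrreerr}, and then \dref{zjscz2.5297x9630111kkhhffrr} follows from the continuous embedding $D(A^\gamma)\hookrightarrow L^\infty(\Omega)$ valid for $\gamma>\frac34$ in the three-dimensional setting.

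I expect the main obstacle to be Step~2: closing the Moser iteration with constants that neither degenerate as $\varepsilon\to0$ nor blow up along the sequence $p_k\to\infty$ hinges on the superlinear gain encoded in $m>\frac43$ dominating the chemotactic term uniformly at every stage, and the delicate Gagliardo--Nirenberg/Young bookkeeping prepared in Lemma \ref{lemma45630223116} (with the specific exponents $p=\frac{25}{16}$ and $\theta=\frac87$) is precisely what renders the borderline value $m=\frac43$ admissible and must be respected at each step of the iteration.
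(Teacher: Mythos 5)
Your overall architecture (upgrade the $L^{p_0}$ bound from Lemma \ref{lemma45566645630223} to better integrability of $\nabla c_\varepsilon$, run a Moser iteration on the first equation, then read off the fluid bounds from Stokes smoothing) is the right one, and your Steps 2 and 3 essentially coincide with the paper's Steps 4--7 and Step 2. But your Step 1 has a genuine gap in the treatment of the convective term, and the ordering you chose is precisely what makes it unclosable. To place $u_\varepsilon\cdot\nabla c_\varepsilon$ in $L^{p_0}(\Omega)$ with $p_0>\frac32$ you only have at your disposal $u_\varepsilon\in L^{l_0}(\Omega)$ with $l_0<3$ (from \dref{3.10gghhjukklllkkllloffghhjjoppuloollgghhhyhh}) and the merely space--time $L^2$ bound on $\nabla c_\varepsilon$ from \dref{bnmbncz2.5ghhjuyuivvbnnihjj}; H\"older then lands the product only in $L^q$ with $q<\frac65$, far below $p_0$, and any attempt to borrow integrability of $\nabla c_\varepsilon$ at this stage is circular. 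The paper avoids this by reversing the order: it first proves $\|u_\varepsilon\|_{L^\infty}\le C$ directly from the Stokes variation-of-constants formula (this uses only $n_\varepsilon\in L^{p_0}$ with $p_0>\frac32$ and $\gamma\in(\frac34,1)$, so that $\gamma+\frac32(\frac1{p_0}-\frac12)<1$), and separately obtains $\|c_\varepsilon\|_{L^4}\le C$ by testing the second equation with $c_\varepsilon^3$; only then does it estimate $\nabla c_\varepsilon$ in $L^{7/2}$ via Duhamel, writing the convective term in divergence form so that only $\|u_\varepsilon c_\varepsilon\|_{L^4}\le\|u_\varepsilon\|_{L^\infty}\|c_\varepsilon\|_{L^4}$ is needed.

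A second, related defect: maximal Sobolev regularity (Lemma \ref{lemma45xy1222232}) delivers only time-integrated bounds $\int e^{\gamma s}\|\Delta c_\varepsilon\|_{L^\gamma}^\gamma\,ds$, from which Gagliardo--Nirenberg does not give the \emph{time-uniform} bound $\sup_t\|\nabla c_\varepsilon(\cdot,t)\|_{L^{q_1}(\Omega)}\le C$ that your Moser iteration in Step 2 requires. This is why the paper switches from maximal regularity (used in Lemma \ref{lemma45566645630223}, where the $\|\Delta c_\varepsilon\|_{L^{p+1}}^{p+1}$ term is absorbed into the left-hand side of the weighted inequality) to semigroup estimates in the present lemma. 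Finally, a minor point: the specific exponents $p=\frac{25}{16}$, $\theta=\frac87$ of Lemma \ref{lemma45630223116} are consumed once, in establishing the starting $L^{p_0}$ bound; the iteration itself only needs $m>1$ and $p>2+m$ together with the $L^{7/2}$ (hence $L^3$) bound on $\nabla c_\varepsilon$, so they need not be ``respected at each step.''
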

\begin{proof}
In what follows, let $C, C_i$ denote some different constants, which are independent of $\varepsilon$, and if no special explanation, they depend at most on $\Omega, \phi, m, n_0, c_0$ and
$u_0$.

{\bf Step 1. The boundedness of $\|{c}_\varepsilon(\cdot,t)\|^{{{4}}}_{L^{{4}}(\Omega)}$ for all $t\in (0, T_{max})$}

Firstly,  taking  ${c^{3}_\varepsilon}$ as the test function for the second  equation of \dref{1.1} and using $\nabla\cdot u_\varepsilon=0$, the H\"{o}lder inequality  and \dref{334444zjscz2.5297x96302222114} yields  that
\begin{equation}
\begin{array}{rl}
&\disp\frac{1}{4}\disp\frac{d}{dt}\|{c}_\varepsilon\|^{{{4}}}_{L^{{4}}(\Omega)}+3
\int_{\Omega} {c^2_\varepsilon}|\nabla c_\varepsilon|^2+ \int_{\Omega} c^{4}_\varepsilon\\
=&\disp{\int_{\Omega} n_\varepsilon c^3_\varepsilon}\\
\leq&\disp{\left(\int_{\Omega}n^{\frac{3}{2}}_\varepsilon\right)^{\frac{2}{3}}\left(\int_{\Omega}c^{9}_\varepsilon\right)^{\frac{1}{3}}}\\
\leq&\disp{C_1\left(\int_{\Omega}c^{9}_\varepsilon\right)^{\frac{1}{3}}~~~\mbox{for all}~~t\in (0, T_{max}).}\\
\end{array}
\label{hhxxcdfssxxdccffgghvvjjcz2.5}
\end{equation}
Now, due to \dref{czfvgb2.5ghhjuyuccvviihjj}, in light of the Gagliardo--Nirenberg inequality and the Young inequality, we derive that
\begin{equation}
\begin{array}{rl}
\disp\left(\int_{\Omega}c^{9}_\varepsilon\right)^{\frac{1}{3}} =&\disp{\| { c^{2}_\varepsilon}\|^{{\frac{3}{2}}}_{L^{\frac{9}{2}}(\Omega)}}\\
\leq&\disp{C_{2}\left(\| \nabla{ c^{2}_\varepsilon}\|^{\frac{7}{5}}_{L^{2}(\Omega)}\|{ c^{2}_\varepsilon}\|^{{\frac{1}{10}}}_{L^{1}(\Omega)}+
\|{ c^{2}_\varepsilon}\|_{L^{1}(\Omega)}^{\frac{3}{2}}\right)}\\
\leq&\disp{C_{3}(\| \nabla{ c^{2}_\varepsilon}\|^{\frac{7}{5}}_{L^{2}(\Omega)}
+1)}\\
\leq&\disp{\frac{1}{4}\| \nabla{ c^{2}_\varepsilon}\|^{2}_{L^{2}(\Omega)}
+C_4~~~\mbox{for all}~~t\in (0, T_{max}).}\\
\end{array}
\label{ddffbnmbnddfgffggjjkkuuiicz2ggghddfvgbhh.htt678ddfghhhyuiihjj}
\end{equation}
Collecting  \dref{ddffbnmbnddfgffggjjkkuuiicz2ggghddfvgbhh.htt678ddfghhhyuiihjj} into \dref{hhxxcdfssxxdccffgghvvjjcz2.5}, in view of 
an ODE comparison argument entails
\begin{equation}
\begin{array}{rl}
&\disp{\int_{\Omega}   c^{4}_\varepsilon\leq C_5~~~\mbox{for all}~~ t\in (0, T_{max}).}\\
\end{array}
\label{czfvgb2.5ghffghjuyuccvviihjj}
\end{equation}

{\bf Step 2. The boundedness of $\|A^\gamma u_{\varepsilon}(\cdot, t)\|_{L^2(\Omega)}$ and $\| u_{\varepsilon}(\cdot, t)\|_{L^{\infty}(\Omega)}$ for all $t\in (0, T_{max})$}

On the basis of the variation-of-constants formula for the projected version of the third
equation in \dref{1.1fghyuisda}, we derive that
$$u_\varepsilon(\cdot, t) = e^{-tA}u_0 +\int_0^te^{-(t-\tau)A}
\mathcal{P}(n_\varepsilon(\cdot,t)\nabla\phi)d\tau~~ \mbox{for all}~~ t\in(0,T_{max}).$$
Therefore, according to standard smoothing
properties of the Stokes semigroup we see that 
\begin{equation}
\begin{array}{rl}
\|A^\gamma u_{\varepsilon}(\cdot, t)\|_{L^2(\Omega)}\leq&\disp{\|A^\gamma
e^{-tA}u_0\|_{L^2(\Omega)} +\int_0^t\|A^\gamma e^{-(t-\tau)A}h_{\varepsilon}(\cdot,\tau)d\tau\|_{L^2(\Omega)}d\tau}\\
\leq&\disp{\|A^\gamma u_0\|_{L^2(\Omega)} +C_{6}\int_0^t(t-\tau)^{-\gamma-\frac{3}{2}(\frac{1}{p_0}-\frac{1}{2})}e^{-\lambda(t-\tau)}\|h_{\varepsilon}(\cdot,\tau)\|_{L^{p_0}(\Omega)}d\tau}\\
\leq&\disp{C_{7}~~ \mbox{for all}~~ t\in(0,T_{max}),}\\
\end{array}
\label{cz2.571hhhhh51ccvvhddfccvvhjjjkkhhggjjllll}
\end{equation}
where $\gamma\in ( \frac{3}{4}, 1), h_{\varepsilon}=\mathcal{P}(n_{\varepsilon}\nabla \phi)$ and $p_0$ is the same as Lemma \ref{lemma45566645630223}.
Here we have used the fact that
$$
\|h_{\varepsilon}(\cdot,t)\|_{L^{p_0}(\Omega)}\leq C ~~~\mbox{for all}~~ t\in(0,T_{max})
$$
as well as
$$\begin{array}{rl}\disp\int_{0}^t(t-\tau)^{-\gamma-\frac{3}{2}(\frac{1}{p_0}-\frac{1}{2})}e^{-\lambda(t-\tau)}ds
\leq&\disp{\int_{0}^{\infty}\sigma^{-\gamma-\frac{3}{2}(\frac{1}{p_0}-\frac{1}{2})} e^{-\lambda\sigma}d\sigma<+\infty.}\\
\end{array}
$$
Observe that  $D(A^\gamma)$ is continuously embedded into $L^\infty(\Omega)$ by $\gamma>\frac{3}{4},$ so that,  \dref{cz2.571hhhhh51ccvvhddfccvvhjjjkkhhggjjllll} yields to
 \begin{equation}
\begin{array}{rl}
\|u_{\varepsilon}(\cdot, t)\|_{L^\infty(\Omega)}\leq  C_{8}~~ \mbox{for all}~~ t\in(0,T_{max}).\\
\end{array}
\label{cz2.5jkkcvvvhjkfffffkhhgll}
\end{equation}

{\bf Step 3. The boundedness of $\|\nabla c_{\varepsilon}(\cdot, t)\|_{L^{\frac{7}{2}}(\Omega)}$  for all $t\in (0, T_{max})$}

An application of the variation of
constants formula for $c_\varepsilon$ leads to
\begin{equation}
\begin{array}{rl}
&\disp{\|\nabla c_\varepsilon(\cdot, t)\|_{L^{\frac{7}{2}}(\Omega)}}\\
\leq&\disp{\|\nabla e^{t(\Delta-1)} c_0\|_{L^{\frac{7}{2}}(\Omega)}+
\int_{0}^t\|\nabla e^{(t-s)(\Delta-1)}(n_\varepsilon(s)\|_{L^{\frac{7}{2}}(\Omega)}ds}\\
&\disp{+\int_{0}^t\|\nabla e^{(t-s)(\Delta-1)}\nabla \cdot(u_{\varepsilon}(s) c_{\varepsilon}(s))\|_{L^{\frac{7}{2}}(\Omega)}ds,}\\
\end{array}
\label{44444zjccfgghhhfgbhjcvvvbscz2.5297x96301ku}
\end{equation}
To estimate the terms on the right of  \dref{44444zjccfgghhhfgbhjcvvvbscz2.5297x96301ku}, we  use the $L^p$-$L^q$ estimates associated heat semigroup
   to get that
\begin{equation}
\begin{array}{rl}
\|\nabla e^{t(\Delta-1)} c_0\|_{L^{\frac{7}{2}}(\Omega)}\leq &\disp{C_{9}~~ \mbox{for all}~~ t\in(0,T_{max})}\\
\end{array}
\label{zjccffgbhjcghhhjjjvvvbscz2.5297x96301ku}
\end{equation}
as well as
\begin{equation}
\begin{array}{rl}
&\disp{\int_{0}^t\|\nabla e^{(t-s)(\Delta-1)}n_\varepsilon(s)\|_{L^{\frac{7}{2}}(\Omega)}ds}\\
\leq&\disp{C_{10}\int_{0}^t[1+(t-s)^{-\frac{1}{2}-\frac{3}{2}(\frac{1}{p_0}-\frac{2}{7})}] e^{-(t-s)}\|n_\varepsilon(s)\|_{L^{p_0}(\Omega)}ds}\\
\leq&\disp{C_{11}~~ \mbox{for all}~~ t\in(0,T_{max})}\\
\end{array}
\label{zjccffgbhjcvvvbscz2.5297x96301ku}
\end{equation}
and
\begin{equation}
\begin{array}{rl}
&\disp{\int_{0}^t\|\nabla e^{(t-s)(\Delta-1)}\nabla \cdot(u_\varepsilon(s) c_\varepsilon(s))\|_{L^{\frac{7}{2}}(\Omega)}ds}\\
\leq&\disp{C_{12}\int_{0}^t\|(-\Delta+1)^\iota e^{(t-s)(\Delta-1)}\nabla \cdot(u_\varepsilon(s) c_\varepsilon(s))\|_{L^{4}(\Omega)}ds}\\
\leq&\disp{C_{13}\int_{0}^t(t-s)^{-\iota-\frac{1}{2}-\tilde{\kappa}} e^{-\lambda(t-s)}\|u_\varepsilon(s) c_\varepsilon(s)\|_{L^{4}(\Omega)}ds}\\
\leq&\disp{C_{14}\int_{0}^t(t-s)^{-\iota-\frac{1}{2}-\tilde{\kappa}} e^{-\lambda(t-s)}\|u_\varepsilon(s)\|_{L^{\infty}(\Omega)}\| c_\varepsilon(s)\|_{L^{4}(\Omega)}ds}\\
\leq&\disp{C_{15}~~ \mbox{for all}~~ t\in(0,T_{max}).}\\
\end{array}
\label{zjccffgbhjcvdgghhhhdfgghhvvbscz2.5297x96301ku}
\end{equation}
where $\iota=\frac{13}{28},\tilde{\kappa}=\frac{1}{56}$.
Here we have use the fact that   \dref{ccvvx1.731426677gg}, \dref{334444zjscz2.5297x96302222114} as well as  $\frac{1}{2}+\frac{3}{2}(\frac{1}{4}-\frac{2}{7})<\iota$ and $\min\{-\iota-\frac{1}{2}-\tilde{\kappa},
-\frac{1}{2}-\frac{3}{2}(\frac{1}{p_0}-\frac{2}{7})\}>-1$.
Combined with \dref{44444zjccfgghhhfgbhjcvvvbscz2.5297x96301ku}--\dref{zjccffgbhjcvdgghhhhdfgghhvvbscz2.5297x96301ku}, we derive that
\begin{equation}
\begin{array}{rl}
\|\nabla c_\varepsilon(\cdot, t)\|_{L^{\frac{7}{2}}(\Omega)}\leq  C_{16}~~ \mbox{for all}~~ t\in(0,T_{max}).\\
\end{array}
\label{cz2.5jkkcvddffgggghhdfffjjkvvhjkfffffkhhgll}
\end{equation}

{\bf Step 4. The boundedness of $\|n_{\varepsilon}(\cdot, t)\|_{L^{p}(\Omega)}$  for all $p>2+m$ and $t\in (0, T_{max})$}

Taking ${(n_{\varepsilon}+\varepsilon)^{p-1}}$ as the test function for the first equation of
\dref{1.1fghyuisda}
 and combining with the second equation, using $\nabla\cdot u_\varepsilon=0$ and the Young inequality, in view of the H\"{o}lder inequality and \dref{cz2.5jkkcvddffgggghhdfffjjkvvhjkfffffkhhgll}, we obtain
\begin{equation}
\begin{array}{rl}
&\disp{\frac{1}{{p}}\frac{d}{dt}\|n_{\varepsilon}+\varepsilon\|^{{{p}}}_{L^{{p}}(\Omega)}+
\frac{m(p-1)}{2}\int_{\Omega}{(n_{\varepsilon}+\varepsilon)^{m+p-3}} |{\nabla} {n}_{\varepsilon}|^2 }
\\
\leq&\disp{\frac{(p-1)}{2m}\int_\Omega (n_{\varepsilon}+\varepsilon)^{p+1-m}|\nabla c_{\varepsilon}|^2}\\
\leq&\disp{\frac{(p-1)}{2m}\left(\int_\Omega (n_{\varepsilon}+\varepsilon)^{3(p+1-m)}\right)^{\frac{1}{3}}\left(|\nabla c_{\varepsilon}|^3\right)^{\frac{2}{3}}}\\
\leq&\disp{C_{17}\left(\int_\Omega (n_{\varepsilon}+\varepsilon)^{3(p+1-m)}\right)^{\frac{1}{3}}~~ \mbox{for all}~~ t\in(0,T_{max}).}\\
\end{array}
\label{cz2.ddffghhjj5}
\end{equation}
On the the hand, in view of $m>\frac{4}{3}>1$ and $p>2+m,$ and hence, due to  the Gagliardo--Nirenberg inequality and the Young inequality, we derive that
\begin{equation}
\begin{array}{rl}
&\disp{C_{18}\|  (n_{\varepsilon}+\varepsilon)^{\frac{p+m-1}{2}}\|
^{\frac{2(p+1-m)}{p+m-1}}_{L^{\frac{6(p+1-m)}{p+m-1}}(\Omega)}}
\\
\leq&\disp{C_{19}(\|\nabla   (n_{\varepsilon}+\varepsilon)^{\frac{p+m-1}{2}}\|_{L^2(\Omega)}^{\frac{2(3p+2-3m)}{3p+3m-4}}\|  (n_{\varepsilon}+\varepsilon)^{\frac{p+m-1}{2}}\|_{L^\frac{2}{p+m-1}(\Omega)}^{\frac{2(p+1-m)}{p+m-1}-\frac{2(3p+2-3m)}{3p+3m-4}}+\|  (n_{\varepsilon}+\varepsilon)^{\frac{p+m-1}{2}}\|_{L^\frac{2}{p+m-1}(\Omega)}^{\frac{2(p+1-m)}{p+m-1}})}\\
\leq&\disp{C_{20}(\|\nabla   (n_{\varepsilon}+\varepsilon)^{\frac{p+m-1}{2}}\|_{L^2(\Omega)}^{\frac{2(p+1-m)}{p+m-1}}+1)}\\
\leq&\disp{\frac{m(p-1)}{4}\int_{\Omega}{(n_{\varepsilon}+\varepsilon)^{m+p-3}} |{\nabla} {n}_{\varepsilon}|^2 +C_{21}~~
\mbox{for all}~~ t\in(0,T_{max}),}\\
\end{array}
\label{cz2.563022222ikddffgopl2sdfg44}
\end{equation}
which together with \dref{cz2.ddffghhjj5} and an ODE comparison argument entails  that
\begin{equation}
\begin{array}{rl}
\|n_{\varepsilon}(\cdot, t)\|_{L^p(\Omega)}\leq  C_{22}~~ \mbox{for all}~~ t\in(0,T_{max})~~~\mbox{and}~~~p>2+m.\\
\end{array}
\label{cz2.5jkkcvvvhjkfffffkhhgll}
\end{equation}

{\bf Step 5. The boundedness of $\|c_{\varepsilon}(\cdot, t)\|_{W^{1,\infty}(\Omega)}$  for all  $t\in (\tau, T_{max})$ with $\tau\in(0,T_{max})$}

Choosing $\theta\in(\frac{1}{2}+\frac{3}{7},1),$ 
 then the domain of the fractional power $D((-\Delta + 1)^\theta)\hookrightarrow W^{1,\infty}(\Omega)$. Hence, again using the $L^p$-$L^q$ estimates associated heat semigroup,
\begin{equation}
\begin{array}{rl}
&\| c_\varepsilon(\cdot, t)\|_{W^{1,\infty}(\Omega)}\\
\leq&\disp{C_{23}\|(-\Delta+1)^\theta c_{\varepsilon}(\cdot, t)\|_{L^{\frac{7}{2}}(\Omega)}}\\
\leq&\disp{C_{24}t^{-\theta}e^{-\lambda t}\|c_0\|_{L^{\frac{7}{2}}(\Omega)}+C_{24}\int_{0}^t(t-s)^{-\theta}e^{-\lambda(t-s)}
\|(n_{\varepsilon}-u_{\varepsilon} \cdot \nabla c_{\varepsilon})(s)\|_{L^{\frac{7}{2}}(\Omega)}ds}\\
\leq&\disp{C_{25}+C_{25}\int_{0}^t(t-s)^{-\theta}e^{-\lambda(t-s)}[\|n_{\varepsilon}(s)\|_{L^{\frac{7}{2}}(\Omega)}+\|u_{\varepsilon}(s)\|_{L^\infty(\Omega)}
\|\nabla c_{\varepsilon}(s)\|_{L^{\frac{7}{2}}(\Omega)}]ds}\\
\leq&\disp{C_{26}~~ \mbox{for all}~~ t\in(\tau,T_{max})}\\
\end{array}
\label{zjccffgbhjcvvvbscz2.5297x96301ku}
\end{equation}
with $\tau\in(0,T_{max})$,
where  we have used \dref{cz2.5jkkcvvvhjkfffffkhhgll}, \dref{cz2.5jkkcvddffgggghhdfffjjkvvhjkfffffkhhgll}, \dref{cz2.5jkkcvvvhjkfffffkhhgll} as well as  the H\"{o}lder inequality and
$$\int_{0}^t(t-s)^{-\theta}e^{-\lambda(t-s)}\leq \int_{0}^{\infty}\sigma^{-\theta}e^{-\lambda\sigma}d\sigma<+\infty.$$

{\bf Step 6. The boundedness of $\|n_{\varepsilon}(\cdot, t)\|_{L^{\infty}(\Omega)}$  for all  $t\in (0, T_{max})$ for all  $t\in (\tau, T_{max})$ with $\tau\in(0,T_{max})$}

In view of \dref{zjccffgbhjcvvvbscz2.5297x96301ku} and
using the outcome of \dref{cz2.ddffghhjj5} with suitably large $p$ as a starting point, 
 which by means of
a Moser-type iteration (see  e.g. Lemma A.1 of  \cite{Tao794}) applied to the first equation of \dref{1.1fghyuisda} to get that
\begin{equation}
\begin{array}{rl}
\|n_{\varepsilon}(\cdot, t)\|_{L^{{\infty}}(\Omega)}\leq C_{27} ~~ \mbox{for all}~~~  t\in(\tau,T_{max}) \\
\end{array}
\label{cz2.5g5gghh56789hhjui78jj90099}
\end{equation}
with $\tau\in(0,T_{max})$.

{\bf Step 7. The boundedness of $\|c_{\varepsilon}(\cdot, t)\|_{W^{1,\infty}(\Omega)}$ and $\|n_{\varepsilon}(\cdot, t)\|_{L^{\infty}(\Omega)}$   for all  $t\in (0, T_{max})$}

In light of \dref{eqx45xx1ddfgggg2112}, \dref{zjccffgbhjcvvvbscz2.5297x96301ku} and \dref{cz2.5g5gghh56789hhjui78jj90099}, we conclude that
\begin{equation}
\begin{array}{rl}
\|n_{\varepsilon}(\cdot, t)\|_{L^{{\infty}}(\Omega)}\leq C_{28} ~~ \mbox{for all}~~~  t\in(0,T_{max}) \\
\end{array}
\label{cz2.5g5ddfgggggghh56789hhjui78jj90099}
\end{equation}
and
\begin{equation}
\begin{array}{rl}
\|\nabla c_{\varepsilon}(\cdot, t)\|_{L^{{\infty}}(\Omega)}\leq C_{29} ~~ \mbox{for all}~~~  t\in(0,T_{max}). \\
\end{array}
\label{cz2.5g5ddfggggggdffgghh56789hhjui78jj90099}
\end{equation}
 The proof is complete.
\end{proof}
By virtue of \dref{1.163072x} and Lemma \ref{lemma45630hhuujj}, the local-in-time solution can be extended to the global-intime
solution.
\begin{proposition}\label{lemma45630hhuujjtydfrjj} Let
$(n_\varepsilon, c_\varepsilon, u_\varepsilon, P_\varepsilon)_{\varepsilon\in(0,1)}$
 be classical solutions of \dref{1.1fghyuisda} constructed in Lemma
\ref{lemma70} on $[0, T_{max})$.  Then the solution is global on $[0,\infty)$. Moreover, one can find 
$C > 0$
independent of $\varepsilon\in(0, 1)$ such that
\begin{equation}
\|n_\varepsilon(\cdot,t)\|_{L^\infty(\Omega)}  \leq C ~~\mbox{for all}~~ t\in(0,\infty)
\label{zjscz2.5297x9630111kkuu}
\end{equation}
as well as
\begin{equation}
\|c_\varepsilon(\cdot,t)\|_{W^{1,\infty}(\Omega)}  \leq C ~~\mbox{for all}~~ t\in(0,\infty)
\label{zjscz2.5297x9630111kkhhii}
\end{equation}
and
\begin{equation}
\|u_\varepsilon(\cdot,t)\|_{W^{1,\infty}(\Omega)}  \leq C ~~\mbox{for all}~~ t\in(0,\infty).
\label{zjscz2.5297x9630111kkhhffrroo}
\end{equation}
Moreover, we also have
\begin{equation}
\|A^\gamma u_\varepsilon(\cdot,t)\|_{L^{2}(\Omega)}  \leq C ~~\mbox{for all}~~ t\in(0,\infty).
\label{zjscz2.5297x9630111kkhhffrreerrpp}
\end{equation}
\end{proposition}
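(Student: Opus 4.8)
The plan is to combine the local existence statement of Lemma \ref{lemma70} with the $\varepsilon$-independent bounds established in Lemma \ref{lemma45630hhuujj} via the extensibility criterion \dref{1.163072x}. Recall that Lemma \ref{lemma70} provides, for each $\varepsilon\in(0,1)$, a classical solution on a maximal interval $[0,T_{max})$ with the property that if $T_{max}<\infty$ then
$$\|n_\varepsilon(\cdot,t)\|_{L^\infty(\Omega)}+\|c_\varepsilon(\cdot,t)\|_{W^{1,\infty}(\Omega)}+\|A^\gamma u_\varepsilon(\cdot,t)\|_{L^2(\Omega)}\to\infty\quad\text{as }t\to T_{max}.$$
On the other hand, Lemma \ref{lemma45630hhuujj} (which applies here since $m>\frac{4}{3}$ and $\gamma$ is as in \dref{ccvvx1.731426677gg}) asserts exactly that each of these three quantities stays bounded by a constant $C$ on $(0,T_{max})$, with $C$ independent of $\varepsilon$. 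First I would argue by contradiction: if $T_{max}<\infty$, the blow-up alternative forces the left-hand side above to be unbounded near $T_{max}$, while \dref{zjscz2.5297x9630111kk}, \dref{zjscz2.5297x9630111kkhh} and \dref{zjscz2.5297x9630111kkhhffrreerr} keep it bounded — a contradiction. Hence $T_{max}=\infty$ for every $\varepsilon\in(0,1)$, which establishes the global existence claim of the proposition.

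Once globality is known, the bounds \dref{zjscz2.5297x9630111kkuu}, \dref{zjscz2.5297x9630111kkhhii}, \dref{zjscz2.5297x9630111kkhhffrroo} and \dref{zjscz2.5297x9630111kkhhffrreerrpp} follow directly by re-reading Lemma \ref{lemma45630hhuujj} with $T_{max}=\infty$: since all the constants produced there depend only on $\Omega,\phi,m,n_0,c_0,u_0$ and not on $\varepsilon$ or on the length of the time interval, the estimates \dref{zjscz2.5297x9630111kk}--\dref{zjscz2.5297x9630111kkhhffrreerr} automatically upgrade to hold for all $t\in(0,\infty)$. The only point requiring a short comment is the $W^{1,\infty}$-bound on $u_\varepsilon$ in \dref{zjscz2.5297x9630111kkhhffrroo}, since Lemma \ref{lemma45630hhuujj} only records an $L^\infty$-bound for $u_\varepsilon$ together with the $D(A^\gamma)$-bound \dref{zjscz2.5297x9630111kkhhffrreerr}. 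To close this gap I would invoke Lemma \ref{lemma630jklhhjj}: the uniform $L^\infty$-bound on $n_\varepsilon$ in particular gives a uniform $L^r$-bound on $n_\varepsilon$ for, say, $r>3$, whence \dref{3.10gghhjuuloollgghhhyhh} yields $\|Du_\varepsilon(\cdot,t)\|_{L^\infty(\Omega)}\le C$ uniformly; combined with the $L^\infty$-bound on $u_\varepsilon$ this produces the claimed $W^{1,\infty}$-estimate.

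The main (and essentially only) obstacle has already been cleared upstream — it is the derivation of the $\varepsilon$-independent $L^\infty$-bound on $n_\varepsilon$ in Lemma \ref{lemma45630hhuujj}, which rests on the maximal Sobolev regularity estimate of Lemma \ref{lemma45xy1222232}, the preliminary bound \dref{334444zjscz2.5297x96302222114} of Lemma \ref{lemma45566645630223}, the exponent bookkeeping in Lemma \ref{lemma45630223116}, and the subsequent Moser iteration. Given all of that, the present proposition is a bootstrapping/packaging statement: the proof is just the contradiction argument against \dref{1.163072x} plus the observation that every constant in Lemma \ref{lemma45630hhuujj} is uniform in $t$ and $\varepsilon$, supplemented by the one application of Lemma \ref{lemma630jklhhjj} needed to turn the $L^\infty$-bound on $u_\varepsilon$ into the gradient bound. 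No new analytical difficulty arises at this step.
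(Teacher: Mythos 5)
Your proof is correct and follows essentially the same route as the paper, which compresses the argument into a single sentence invoking the extensibility criterion \dref{1.163072x} together with Lemma \ref{lemma45630hhuujj}. The contradiction against a finite $T_{max}$ is exactly right, and the observation that all constants in Lemma \ref{lemma45630hhuujj} are independent of both $\varepsilon$ and the length of the time interval is the key packaging step.

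You are also more careful than the paper on one point: Lemma \ref{lemma45630hhuujj} only records $\|u_\varepsilon(\cdot,t)\|_{L^\infty(\Omega)}\le C$ and $\|A^\gamma u_\varepsilon(\cdot,t)\|_{L^2(\Omega)}\le C$ with $\gamma\in(\frac34,1)$, and neither of these embeds directly into $W^{1,\infty}(\Omega)$ (the $D(A^\gamma)$ bound with $\gamma<1$ falls short of controlling $\nabla u_\varepsilon$ in $L^\infty$). The proposition nevertheless claims \dref{zjscz2.5297x9630111kkhhffrroo}, and the paper's one-line proof does not close this gap. Your appeal to Lemma \ref{lemma630jklhhjj} with a uniform $L^r$-bound for $r>3$ (available trivially from $\|n_\varepsilon\|_{L^\infty}\le C$) is the natural fix and yields $\|Du_\varepsilon(\cdot,t)\|_{L^\infty(\Omega)}\le C$; combined with the $L^\infty$-bound on $u_\varepsilon$ this gives the asserted $W^{1,\infty}$-estimate. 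One small caveat: the preamble of Lemma \ref{lemma630jklhhjj} as stated restricts to $l\in[1,\infty)$, while the case distinction \dref{3.10gghhjuulooll} allows $l\le\infty$ for $r>3$; the intent is clearly that $l=\infty$ is admissible, and the same conclusion follows directly from $L^p$-$L^q$ smoothing of the Stokes semigroup applied to $u_\varepsilon(t)=e^{-tA}u_0+\int_0^t e^{-(t-s)A}\mathcal{P}(n_\varepsilon\nabla\phi)(s)\,ds$ together with the uniform $L^\infty$-bound on $n_\varepsilon\nabla\phi$, so the minor imprecision in the cited lemma's statement is harmless.
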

\begin{lemma}\label{lemma45630hhuujjuuyy}
Let $m> \frac{4}{3}$.
Then one can find $\mu\in(0, 1)$ such that for some $C > 0$
%
%
\begin{equation}
\|c_\varepsilon(\cdot,t)\|_{C^{\mu,\frac{\mu}{2}}(\Omega\times[t,t+1])}  \leq C ~~\mbox{for all}~~ t\in(0,\infty)
\label{zjscz2.5297x9630111kkhhiioo}
\end{equation}
as well as
\begin{equation}
\|u_\varepsilon(\cdot,t)\|_{C^{\mu,\frac{\mu}{2}}(\Omega\times[t,t+1])} \leq C ~~\mbox{for all}~~ t\in(0,\infty),
\label{zjscz2.5297x9630111kkhhffrroojj}
\end{equation}
and such that for any $\tau> 0$
 there exists $C(\tau) > 0$ fulfilling
\begin{equation}
\|\nabla c_\varepsilon(\cdot,t)\|_{C^{\mu,\frac{\mu}{2}}(\Omega\times[t,t+1])} \leq C ~~\mbox{for all}~~ t\in(\tau,\infty).
\label{zjscz2.5297x9630111kkhhffrreerrpphh}
\end{equation}
\end{lemma}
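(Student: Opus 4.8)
The plan is to feed the $\varepsilon$- and $t$-uniform bounds of Proposition \ref{lemma45630hhuujjtydfrjj} into standard parabolic and Stokes regularity theory, observing that every constant which appears inherits this uniformity, and then merging the three resulting exponents into a single $\mu\in(0,1)$. For \dref{zjscz2.5297x9630111kkhhiioo}, write the second equation of \dref{1.1fghyuisda} as $c_{\varepsilon t}-\Delta c_\varepsilon=g_\varepsilon$ with $g_\varepsilon:=-c_\varepsilon+n_\varepsilon-u_\varepsilon\cdot\nabla c_\varepsilon$; by \dref{zjscz2.5297x9630111kkuu}, \dref{zjscz2.5297x9630111kkhhii} and \dref{zjscz2.5297x9630111kkhhffrroo} we have $\|g_\varepsilon\|_{L^\infty(\Omega\times(0,\infty))}\le C$ with $C$ independent of $\varepsilon$. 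Interior and up-to-the-boundary Hölder estimates for linear parabolic equations under homogeneous Neumann conditions (De Giorgi--Nash--Moser theory; cf. Ladyzhenskaya--Solonnikov--Ural'ceva), combined with $c_0\in W^{2,\infty}(\Omega)$, then yield some $\mu_1\in(0,1)$ and $C>0$ for which \dref{zjscz2.5297x9630111kkhhiioo} holds; uniformity in $t$ comes from the time-translation invariance of the equation and the $t$-independence of the bound on $g_\varepsilon$.

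For \dref{zjscz2.5297x9630111kkhhffrroojj}, recall that in projected form the Stokes equation reads $u_{\varepsilon t}+Au_\varepsilon=\mathcal P(n_\varepsilon\nabla\phi)$, and that $\|\mathcal P(n_\varepsilon\nabla\phi)(\cdot,t)\|_{L^q(\Omega)}\le C$ for every $q\in(1,\infty)$ by \dref{dd1.1fghyuisdakkkllljjjkk} and \dref{zjscz2.5297x9630111kkuu}. Spatial Hölder continuity follows from \dref{zjscz2.5297x9630111kkhhffrreerrpp} via the embedding $D(A^\gamma)\hookrightarrow C^{0,2\gamma-\frac32}(\bar\Omega)$, recalling $\gamma\in(\frac34,1)$; for time regularity one writes, for $t<t'\le t+1$,
$$u_\varepsilon(t')-u_\varepsilon(t)=\big(e^{-(t'-t)A}-\mathrm{Id}\big)u_\varepsilon(t)+\int_t^{t'}e^{-(t'-s)A}\mathcal P(n_\varepsilon\nabla\phi)(s)\,ds ,$$
estimates the first summand by $\|(e^{-\sigma A}-\mathrm{Id})A^{-\theta}\|\le C\sigma^{\theta}$ with $\theta\in(\frac34,\gamma)$ together with \dref{zjscz2.5297x9630111kkhhffrreerrpp}, and the second by $\|A^{\beta}e^{-\sigma A}\|\le C\sigma^{-\beta}e^{-\lambda\sigma}$ with $\beta\in(\frac34,1)$ and the $L^q$-bound on the forcing. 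This produces some $\mu_2\in(0,1)$ with \dref{zjscz2.5297x9630111kkhhffrroojj}; alternatively one may invoke the regularity results already cited in connection with Lemma \ref{lemma70}.

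For \dref{zjscz2.5297x9630111kkhhffrreerrpphh}, which is the step that genuinely upgrades regularity, note that $g_\varepsilon$ is bounded in $L^\infty(\Omega\times(0,\infty))\subset L^p(\Omega\times(t,t+2))$ uniformly in $\varepsilon$ and $t$. Parabolic $L^p$ maximal regularity for the Neumann heat problem (in the spirit of Lemma \ref{lemma45xy1222232}), applied on cylinders based at $\tau/2$ so as to absorb the contribution of the initial datum — whose compatibility with the Neumann condition is not postulated — then yields, for every $p\in(1,\infty)$ and every $t\ge\tau$,
$$\|c_\varepsilon\|_{W^{2,1}_p(\Omega\times(t,t+1))}\le C(p,\tau)$$
with $C(p,\tau)$ independent of $\varepsilon$. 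Choosing $p>5$ and invoking the parabolic Sobolev embedding $W^{2,1}_p(\Omega\times I)\hookrightarrow C^{1+\beta_3,\frac{1+\beta_3}{2}}(\bar\Omega\times\bar I)$ with $\beta_3:=1-\frac5p\in(0,1)$ provides a uniform bound for $\|c_\varepsilon\|_{C^{1+\beta_3,\frac{1+\beta_3}{2}}(\Omega\times[t,t+1])}$, $t\ge\tau$, which in particular contains \dref{zjscz2.5297x9630111kkhhffrreerrpphh}; on the remaining slab $[0,\tau]$ the datum bound \dref{eqx45xx1ddfgggg2112} already controls $c_\varepsilon$ in $W^{2,\infty}(\Omega)$. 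It then suffices to set $\mu:=\min\{\mu_1,\mu_2,\beta_3\}$.

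I expect the last step to be the main obstacle: whereas boundedness of $\nabla c_\varepsilon$ is already furnished by Proposition \ref{lemma45630hhuujjtydfrjj}, gaining \emph{Hölder} continuity forces the passage from the mere $L^\infty$-information on the source term $g_\varepsilon$ to $W^{2,1}_p$-control of $c_\varepsilon$, and this must be carried out with constants independent of both $\varepsilon$ and $t$ while carefully isolating the possible initial boundary layer stemming from the absence of a Neumann compatibility condition on $c_0$ — which is precisely why the estimate is restricted to $t\ge\tau$. By contrast, the Hölder estimates for $c_\varepsilon$ and $u_\varepsilon$ are comparatively soft consequences of the uniform bounds already established.
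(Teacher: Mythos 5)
Your proposal is correct and follows essentially the same route as the paper: the paper's proof likewise sets $g_\varepsilon=-c_\varepsilon+n_\varepsilon-u_\varepsilon\cdot\nabla c_\varepsilon$, notes its uniform $L^\infty$ bound via Proposition \ref{lemma45630hhuujjtydfrjj}, and then invokes standard parabolic regularity for \dref{zjscz2.5297x9630111kkhhiioo} and \dref{zjscz2.5297x9630111kkhhffrreerrpphh} and standard semigroup estimates for \dref{zjscz2.5297x9630111kkhhffrroojj}. You merely supply the details (De Giorgi--Nash--Moser, the $D(A^\gamma)$ embedding and the $W^{2,1}_p$ bootstrap away from $t=0$) that the paper leaves implicit.
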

\begin{proof}
Firstly, let $g_\varepsilon(x, t) := -c_\varepsilon+n_{\varepsilon}-u_{\varepsilon}\cdot\nabla c_{\varepsilon}.$
Then by Proposition \ref{lemma45630hhuujjtydfrjj}, we derive that
$ g_\varepsilon$
is bounded in $L^{\infty} (\Omega\times(0, T))$ for any  $\varepsilon\in(0,1)$,  we may invoke the standard parabolic regularity theory  to the second
equation of \dref{1.1fghyuisda} and  infer that
\dref{zjscz2.5297x9630111kkhhiioo} and \dref{zjscz2.5297x9630111kkhhffrreerrpphh} holds.
With the help of the Proposition \ref{lemma45630hhuujjtydfrjj} again, performing standard semigroup estimation techniques to the third equation of \dref{1.1fghyuisda}, we can finally get \dref{zjscz2.5297x9630111kkhhffrroojj}.
\end{proof}
%
%
%
%
%
%
%
%
%
%
%
%
%
%
%
%
%
%
%
%
\section{ Passing to the limit}
To prepare our subsequent compactness properties of
$(n_\varepsilon, c_\varepsilon, u_\varepsilon, P_\varepsilon)$ by means of the Aubin-Lions lemma (See e.g. Simon \cite{Simon}), employing almost exactly the same arguments as in the proof of Lemma 5.1 in \cite{Zhengsdsd6} (see also Lemmas 3.22--3.23 of \cite{Winkler11215}), and taking advantage of Proposition \ref{lemma45630hhuujjtydfrjj}, we conclude
the following regularity property with respect to the time variable.
%
%
\begin{lemma}\label{lemma45630hhuujjuuyytt}
Let $m> \frac{4}{3}$.
Then one can find $\varepsilon\in(0, 1)$ such that for some $C > 0$
%
%
\begin{equation}
\|\partial_tn_\varepsilon(\cdot,t)\|_{(W^{2,2}_0(\Omega))^*}  \leq C ~~\mbox{for all}~~ t\in(0,\infty).
\label{zjscz2.5297x9630111kkhhiioott}
\end{equation}
Moreover,
let $\varsigma> \max\{m, 2(m - 1 )\}$. Then
for all $T > 0$ and $\varepsilon\in(0,1)$ there exists $C(T) > 0$ such that
\begin{equation}
\int_0^T\|\partial_tn_\varepsilon^\varsigma(\cdot,t)\|_{(W^{3,2}_0(\Omega))^*}dt  \leq C(T) .
\label{zjscz2.5297x9630111kkhhiioott4}
\end{equation}
\end{lemma}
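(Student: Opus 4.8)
\textbf{Proof proposal for Lemma \ref{lemma45630hhuujjuuyytt}.}
The plan is to read off both time-regularity estimates directly from the equations satisfied by $n_\varepsilon$ and $n_\varepsilon^\varsigma$, testing against smooth functions and using the uniform bounds collected in Proposition \ref{lemma45630hhuujjtydfrjj} together with the spacetime integrability from Lemma \ref{lemmaghjssddgghhmk4563025xxhjklojjkkk}. For \dref{zjscz2.5297x9630111kkhhiioott}, fix $\psi\in C^\infty_0(\Omega)$ with $\|\psi\|_{W^{2,2}(\Omega)}\le 1$. Multiplying the first equation of \dref{1.1fghyuisda} by $\psi$ and integrating by parts twice in the diffusion term and once in the taxis and convection terms, one obtains
\[
\Big|\int_\Omega \partial_t n_\varepsilon\,\psi\Big|
=\Big|\int_\Omega (n_\varepsilon+\varepsilon)^m\Delta\psi+\int_\Omega n_\varepsilon\nabla c_\varepsilon\cdot\nabla\psi+\int_\Omega n_\varepsilon u_\varepsilon\cdot\nabla\psi\Big|.
\]
Each term on the right is bounded by a constant times $\|\psi\|_{W^{2,2}(\Omega)}$: the first because $\|(n_\varepsilon+\varepsilon)^m\|_{L^2(\Omega)}\le C$ by \dref{zjscz2.5297x9630111kkuu}, and the second and third because $\|n_\varepsilon\nabla c_\varepsilon\|_{L^2(\Omega)}$ and $\|n_\varepsilon u_\varepsilon\|_{L^2(\Omega)}$ are bounded by \dref{zjscz2.5297x9630111kkuu}, \dref{zjscz2.5297x9630111kkhhii} and \dref{zjscz2.5297x9630111kkhhffrroo}. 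Taking the supremum over all such $\psi$ gives \dref{zjscz2.5297x9630111kkhhiioott}.

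For \dref{zjscz2.5297x9630111kkhhiioott4}, I would compute the equation for $w_\varepsilon:=n_\varepsilon^\varsigma$. A direct calculation gives
\[
\partial_t w_\varepsilon
=\varsigma n_\varepsilon^{\varsigma-1}\Delta(n_\varepsilon+\varepsilon)^m
-\varsigma n_\varepsilon^{\varsigma-1}\nabla\cdot(n_\varepsilon\nabla c_\varepsilon)
-\varsigma n_\varepsilon^{\varsigma-1}u_\varepsilon\cdot\nabla n_\varepsilon,
\]
and after rewriting the first two terms in divergence form one is led to expressions of the type $\nabla\cdot\big(n_\varepsilon^{\varsigma-1}\nabla(n_\varepsilon+\varepsilon)^m\big)$ plus lower-order products involving $\nabla n_\varepsilon^{\varsigma-1}\cdot\nabla(n_\varepsilon+\varepsilon)^m$ and $\nabla\cdot(n_\varepsilon^\varsigma u_\varepsilon)$. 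Testing with $\psi\in C_0^\infty(\Omega)$, $\|\psi\|_{W^{3,2}(\Omega)}\le1$, moving derivatives onto $\psi$ so that at most first derivatives fall on the unknowns, and using $\varsigma>\max\{m,2(m-1)\}$ to guarantee that the resulting powers of $n_\varepsilon$ multiplying $|\nabla n_\varepsilon|^2$-type quantities can be absorbed, one reduces everything to integrals of the form $\int_0^T\!\int_\Omega (n_\varepsilon+\varepsilon)^{2m-4}|\nabla n_\varepsilon|^2$, $\int_0^T\!\int_\Omega|\nabla c_\varepsilon|^2$ and $\int_0^T\!\int_\Omega|\nabla u_\varepsilon|^2$ (the last via $\nabla\cdot u_\varepsilon=0$), all of which are finite by \dref{bnmbncz2.5ghhjuyuivvbnnihjj} and the $L^\infty$ bound on $n_\varepsilon$, together with $\|u_\varepsilon\|_{L^\infty}\le C$. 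Integrating in time over $(0,T)$ and taking the supremum over $\psi$ yields \dref{zjscz2.5297x9630111kkhhiioott4} with $C(T)$ linear in $T$ (up to the contributions of the spacetime integrals, which themselves grow at most linearly).

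The main obstacle is the algebra in the second estimate: one must verify that when $\partial_t n_\varepsilon^\varsigma$ is written in a form where only one spatial derivative acts on each factor of $n_\varepsilon$, the exponents of $n_\varepsilon$ appearing in front of $|\nabla n_\varepsilon|^2$ are exactly $2m-4$ (so the bound \dref{bnmbncz2.5ghhjuyuivvbnnihjj} applies) or at least are dominated by a bounded power, and this is precisely where the hypothesis $\varsigma>\max\{m,2(m-1)\}$ enters — it is the condition that makes the exponent $\varsigma+m-4$ (arising from terms like $n_\varepsilon^{\varsigma-2}(n_\varepsilon+\varepsilon)^{m-1}|\nabla n_\varepsilon|^2$ after one integration by parts) nonnegative, hence controllable by the $L^\infty$-bound on $n_\varepsilon$ times the finite integral in \dref{bnmbncz2.5ghhjuyuivvbnnihjj}. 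Since the whole argument parallels Lemma 5.1 of \cite{Zhengsdsd6} and Lemmas 3.22--3.23 of \cite{Winkler11215}, I would keep the presentation brief, cite those references for the routine bookkeeping, and only make explicit the role of the uniform estimates from Proposition \ref{lemma45630hhuujjtydfrjj} and Lemma \ref{lemmaghjssddgghhmk4563025xxhjklojjkkk}.
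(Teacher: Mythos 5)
The paper does not actually write out this proof; it simply cites Lemma~5.1 of \cite{Zhengsdsd6} and Lemmas~3.22--3.23 of \cite{Winkler11215} and invokes Proposition~\ref{lemma45630hhuujjtydfrjj}. Your plan (test the equations against $\psi$, integrate by parts, apply the uniform $L^\infty$ bounds and the spacetime integrals) matches that reference-based strategy, and your derivation of \dref{zjscz2.5297x9630111kkhhiioott} is clean and correct: it needs only the $L^\infty$ bounds on $n_\varepsilon$, $\nabla c_\varepsilon$, $u_\varepsilon$ from Proposition~\ref{lemma45630hhuujjtydfrjj}, not even the spacetime estimates.

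On \dref{zjscz2.5297x9630111kkhhiioott4}, however, two points in your algebra should be corrected before citing the references for the remainder. First, the exponent produced by the term $n_\varepsilon^{\varsigma-2}(n_\varepsilon+\varepsilon)^{m-1}|\nabla n_\varepsilon|^2$ is $\varsigma+m-3$, not $\varsigma+m-4$. Second, and more importantly, the hypothesis $\varsigma>\max\{m,2(m-1)\}$ does not in general make this exponent nonnegative: for $4/3<m<3/2$ one has $\max\{m,2(m-1)\}<3-m$, so $\varsigma$ just above the threshold can give $\varsigma+m-3<0$. Likewise, the domination $n_\varepsilon^{\varsigma-2}(n_\varepsilon+\varepsilon)^{m-1}\le C(n_\varepsilon+\varepsilon)^{2m-4}$ that would let you invoke \dref{bnmbncz2.5ghhjuyuivvbnnihjj} requires $\varsigma\geq 2$, which the hypothesis does not force when $m<2$. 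What actually makes this work — and what the paper's own use of the estimate after \dref{fvgbhzjscz2.5297x96302222tt4455hyuhii} in Lemma~\ref{lemma45630223} suggests — is the full $p$-family of spacetime bounds obtained by integrating \dref{cz2.ddffghhjj5} in time and using the $L^\infty$ bound on $n_\varepsilon$: for every $p>1$, $\int_0^T\int_\Omega (n_\varepsilon+\varepsilon)^{m+p-3}|\nabla n_\varepsilon|^2\le C(T)$. This is strictly stronger than \dref{bnmbncz2.5ghhjuyuivvbnnihjj} and should be the cited input rather than \dref{bnmbncz2.5ghhjuyuivvbnnihjj}; even so, the precise role of $\varsigma>\max\{m,2(m-1)\}$ (as opposed to, say, $\varsigma\ge 2$) is not transparent from this route alone and is left to the cited references — which, to be fair, is exactly what the paper does as well.
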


Based on above lemmas and by extracting suitable subsequences in a standard
way, we could see the solution of \dref{1.1} is indeed globally solvable. To this end, from the idea of \cite{Zhengsdsd6} (see also \cite{Winkler11215} and \cite{Liuddfffff}), we state the solution conception as follows.

\begin{definition}\label{df1}
Let $T > 0$ and $(n_0, c_0, u_0)$ fulfills
\dref{ccvvx1.731426677gg}.
Then a triple of functions $(n, c, u)$ is
called a weak solution of \dref{1.1} if the following conditions are satisfied
\begin{equation}
 \left\{\begin{array}{ll}
   n\in L_{loc}^1(\bar{\Omega}\times[0,T)),\\
    c \in L_{loc}^1([0,T); W^{1,1}(\Omega)),\\
u \in  L_{loc}^1([0,T); W^{1,1}(\Omega)),\\
 \end{array}\right.\label{dffff1.1fghyuisdakkklll}
\end{equation}
where $n\geq 0$ and $c\geq 0$ in
$\Omega\times(0, T)$ as well as $\nabla\cdot u = 0$ in the distributional sense in
 $\Omega\times(0, T)$,
moreover,
\begin{equation}\label{726291hh}
\begin{array}{rl}
 &~~ n^m~\mbox{belong to}~~ L^1_{loc}(\bar{\Omega}\times [0, \infty)),\\
  &cu,~ ~nu ~~\mbox{and}~~n\nabla c~ \mbox{belong to}~~
L^1_{loc}(\bar{\Omega}\times [0, \infty);\mathbb{R}^{3})
\end{array}
\end{equation}
and
\begin{equation}
\begin{array}{rl}\label{eqx45xx12112ccgghh}
\disp{-\int_0^{T}\int_{\Omega}n\varphi_t-\int_{\Omega}n_0\varphi(\cdot,0)  }=&\disp{
\int_0^T\int_{\Omega}n^m\Delta\varphi+\int_0^T\int_{\Omega}n\nabla c\cdot\nabla\varphi}\\
&+\disp{\int_0^T\int_{\Omega}nu\cdot\nabla\varphi}\\
\end{array}
\end{equation}
for any $\varphi\in C_0^{\infty} (\bar{\Omega}\times[0, T))$ satisfying
 $\frac{\partial\varphi}{\partial\nu}= 0$ on $\partial\Omega\times (0, T)$
  as well as
  \begin{equation}
\begin{array}{rl}\label{eqx45xx12112ccgghhjj}
\disp{-\int_0^{T}\int_{\Omega}c\varphi_t-\int_{\Omega}c_0\varphi(\cdot,0)  }=&\disp{-
\int_0^T\int_{\Omega}\nabla c\cdot\nabla\varphi-\int_0^T\int_{\Omega}c\varphi+\int_0^T\int_{\Omega}n\varphi+
\int_0^T\int_{\Omega}cu\cdot\nabla\varphi}\\
\end{array}
\end{equation}
for any $\varphi\in C_0^{\infty} (\bar{\Omega}\times[0, T))$  and
\begin{equation}
\begin{array}{rl}\label{eqx45xx12112ccgghhjjgghh}
\disp{-\int_0^{T}\int_{\Omega}u\varphi_t-\int_{\Omega}u_0\varphi(\cdot,0) }=&\disp{-
\int_0^T\int_{\Omega}\nabla u\cdot\nabla\varphi-
\int_0^T\int_{\Omega}n\nabla\phi\cdot\varphi}\\
\end{array}
\end{equation}
for any $\varphi\in C_0^{\infty} (\bar{\Omega}\times[0, T);\mathbb{R}^3)$ fulfilling
$\nabla\varphi\equiv 0$ in
 $\Omega\times(0, T)$.
 If $\Omega\times (0,\infty)\longrightarrow \mathbb{R}^5$ is a weak solution of \dref{1.1} in
 $\Omega\times(0, T)$ for all $T > 0$, then we call
$(n, c, u)$ a global weak solution of \dref{1.1}.
\end{definition}
With the uniform bounds from Proposition \ref{lemma45630hhuujjtydfrjj} and Lemma \ref{lemma45630hhuujjuuyy} we are now in the
position to obtain limit functions $n, c$ and $u$, which at least fulfill the regularity assumptions required
in Definition \ref{df1}.
\begin{lemma}\label{lemma45630223}
Assume that   $m> \frac{4}{3}$.
 Then there exists $(\varepsilon_j)_{j\in \mathbb{N}}\subset (0, 1)$ such that $\varepsilon_j\rightarrow 0$ as $j\rightarrow\infty$ and that
\begin{equation} n_\varepsilon\rightarrow n ~~\mbox{a.e.}~~ \mbox{in}~~ \Omega\times (0,\infty),
\label{zjscz2.5297x9630222222}
\end{equation}
\begin{equation}
 n_\varepsilon\rightharpoonup n ~~\mbox{weakly star in}~~ L^\infty(\Omega\times(0,\infty)),
 \label{zjscz2.5297x9630222222ee}
\end{equation}
\begin{equation}
n_\varepsilon\rightarrow n ~~\mbox{in}~~ C^0_{loc}([0,\infty); (W^{2,2}_0 (\Omega))^*),
\label{zjscz2.5297x96302222tt}
\end{equation}
\begin{equation}
c_\varepsilon\rightarrow c ~~\mbox{in}~~ C^0_{loc}(\bar{\Omega}\times[0,\infty)),
 \label{zjscz2.5297x96302222tt3}
\end{equation}
\begin{equation}
\nabla c_\varepsilon\rightarrow \nabla c ~~\mbox{in}~~ C^0_{loc}(\bar{\Omega}\times[0,\infty)),
 \label{zjscz2.5297x96302222tt4}
\end{equation}
\begin{equation}
\nabla c_\varepsilon\rightarrow \nabla c ~~\mbox{in}~~ L^{\infty}(\Omega\times(0,\infty)),
 \label{zjscz2.5297x96302222tt4}
\end{equation}
\begin{equation}
u_\varepsilon\rightarrow u ~~\mbox{in}~~ C^0_{loc}(\bar{\Omega}\times(0,\infty)),
 \label{zjscz2.5297x96302222tt44}
\end{equation}
and
\begin{equation}
D u_\varepsilon\rightharpoonup Du ~~\mbox{weakly in}~~L^{\infty}(\Omega\times[0,\infty))
 \label{zjscz2.5297x96302222tt4455}
\end{equation}
 with some triple $(n, c, u)$ which is a global weak solution of \dref{1.1} in the sense of Definition \ref{df1}. Moreover, $n$ satisfies
 \begin{equation}
n\in C^0_{\omega-*}([0,\infty); L^\infty(\Omega)).
 \label{zjscz2.5297x96302222tt4455hyuhii}
 \end{equation}
\end{lemma}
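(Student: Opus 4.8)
The goal is to pass to the limit $\varepsilon=\varepsilon_j\to0$ in the approximate system \dref{1.1fghyuisda} and recover a global weak solution of \dref{1.1} together with the claimed weak-$*$ continuity of $n$. The strategy is the standard compactness scheme built on the $\varepsilon$-independent bounds already collected. First I would record the uniform estimates: from Proposition \ref{lemma45630hhuujjtydfrjj} the families $(n_\varepsilon)$, $(c_\varepsilon)$, $(u_\varepsilon)$ are bounded in $L^\infty(\Omega\times(0,\infty))$, $L^\infty((0,\infty);W^{1,\infty}(\Omega))$ and $L^\infty((0,\infty);W^{1,\infty}(\Omega))$ respectively, and $(A^\gamma u_\varepsilon)$ is bounded in $L^\infty((0,\infty);L^2(\Omega))$; from Lemma \ref{lemma45630hhuujjuuyy} we have uniform H\"older bounds for $c_\varepsilon$, $u_\varepsilon$ (and, away from $t=0$, for $\nabla c_\varepsilon$) on parabolic cylinders $\Omega\times[t,t+1]$; and from Lemma \ref{lemma45630hhuujjuuyytt} we have the uniform bound $\|\partial_t n_\varepsilon\|_{(W^{2,2}_0(\Omega))^*}\le C$ together with the local-in-time bound on $\partial_t n_\varepsilon^\varsigma$ in $(W^{3,2}_0(\Omega))^*$.

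**Extraction of subsequences.** Using these bounds I would extract a single subsequence $(\varepsilon_j)$ along which all the convergences \dref{zjscz2.5297x9630222222}--\dref{zjscz2.5297x96302222tt4455} hold. Concretely: the $C^{\mu,\mu/2}$ bounds for $c_\varepsilon$ and $u_\varepsilon$ give, via Arzel\`a--Ascoli, the locally uniform convergences $c_\varepsilon\to c$ and $u_\varepsilon\to u$ in $C^0_{loc}(\bar\Omega\times[0,\infty))$ and $C^0_{loc}(\bar\Omega\times(0,\infty))$, and likewise $\nabla c_\varepsilon\to\nabla c$ in $C^0_{loc}(\bar\Omega\times(0,\infty))$; the $W^{1,\infty}$ bounds then upgrade these to weak-$*$ convergence of $\nabla c_\varepsilon$ and $Du_\varepsilon$ in $L^\infty(\Omega\times(0,\infty))$. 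For $n_\varepsilon$ I would apply the Aubin--Lions lemma (Simon \cite{Simon}) to the pair $L^\infty(\Omega)\hookrightarrow\hookrightarrow (W^{1,2}_0(\Omega))^*$ using the bound on $\partial_t n_\varepsilon^\varsigma$ (after noting $n_\varepsilon^\varsigma$ is bounded in a suitable Bochner space because $n_\varepsilon$ is $L^\infty$-bounded), which yields strong convergence of $n_\varepsilon^\varsigma$ in $L^1_{loc}$, hence $n_\varepsilon\to n$ a.e.\ in $\Omega\times(0,\infty)$; combined with the $L^\infty$ bound this gives \dref{zjscz2.5297x9630222222}, \dref{zjscz2.5297x9630222222ee}, and by dominated convergence the strong convergence of $n_\varepsilon$ and of $n_\varepsilon^m$ in every $L^q_{loc}$ with $q<\infty$. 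The bound on $\partial_t n_\varepsilon$ in $(W^{2,2}_0(\Omega))^*$ together with the $L^\infty$ bound gives equicontinuity of $t\mapsto n_\varepsilon(\cdot,t)$ in $(W^{2,2}_0(\Omega))^*$, so Arzel\`a--Ascoli in that space yields \dref{zjscz2.5297x96302222tt}.

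**Passing to the limit in the weak formulations.** With these convergences in hand, I would fix test functions as in Definition \ref{df1} and pass to the limit termwise in the weak identities satisfied by $(n_\varepsilon,c_\varepsilon,u_\varepsilon)$. In the $n$-equation, $\int n_\varepsilon\varphi_t\to\int n\varphi_t$ by weak-$*$ $L^\infty$ convergence; $\int n_\varepsilon^m\Delta\varphi\to\int n^m\Delta\varphi$ because $(n_\varepsilon+\varepsilon)^m\to n^m$ strongly in $L^1_{loc}$; the chemotaxis term $\int n_\varepsilon\nabla c_\varepsilon\cdot\nabla\varphi\to\int n\nabla c\cdot\nabla\varphi$ and the transport term $\int n_\varepsilon u_\varepsilon\cdot\nabla\varphi\to\int n u\cdot\nabla\varphi$ follow by pairing strong $L^1_{loc}$ convergence of $n_\varepsilon$ against the uniform (indeed strong-$C^0_{loc}$) convergence of $\nabla c_\varepsilon$ and $u_\varepsilon$. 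The $c$- and $u$-equations are easier since all the relevant products ($\nabla c_\varepsilon$, $c_\varepsilon u_\varepsilon$, $n_\varepsilon\nabla\phi$, $\nabla u_\varepsilon$) converge strongly or weakly-$*$ in $L^1_{loc}$ by the above. This establishes \dref{eqx45xx12112ccgghh}--\dref{eqx45xx12112ccgghhjjgghh} and the regularity \dref{dffff1.1fghyuisdakkklll}--\dref{726291hh}, while nonnegativity of $n,c$ and $\nabla\cdot u=0$ persist in the limit; thus $(n,c,u)$ is a global weak solution.

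**The weak-$*$ continuity of $n$ and the main obstacle.** It remains to prove \dref{zjscz2.5297x96302222tt4455hyuhii}, i.e.\ $n\in C^0_{\omega-*}([0,\infty);L^\infty(\Omega))$. The argument is: the a.e.\ limit $n$ inherits the uniform bound $\|n(\cdot,t)\|_{L^\infty(\Omega)}\le C$; combined with \dref{zjscz2.5297x96302222tt}, namely $n_\varepsilon\to n$ in $C^0_{loc}([0,\infty);(W^{2,2}_0(\Omega))^*)$, one gets that $t\mapsto n(\cdot,t)$ is continuous into $(W^{2,2}_0(\Omega))^*$; then a standard density argument — $W^{2,2}_0(\Omega)$ is dense in $L^1(\Omega)$, $(L^1)^*=L^\infty$, and the $L^\infty$-norm of $n(\cdot,t)$ is uniformly bounded — promotes this to continuity in the weak-$*$ topology of $L^\infty(\Omega)$, which is exactly \dref{zjscz2.5297x96302222tt4455hyuhii}. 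I expect the genuinely delicate point to be the justification of the Aubin--Lions step for $n_\varepsilon^\varsigma$: one must check the exponent $\varsigma>\max\{m,2(m-1)\}$ (from Lemma \ref{lemma45630hhuujjuuyytt}) makes $\nabla n_\varepsilon^\varsigma$ controllable — i.e.\ that $n_\varepsilon^\varsigma$ is bounded in $L^{s}_{loc}((0,\infty);W^{1,s}(\Omega))$ for some $s>1$ — which relies on the spatial gradient estimate $\int_0^T\!\int_\Omega (n_\varepsilon+\varepsilon)^{2m-4}|\nabla n_\varepsilon|^2\le C$ from \dref{bnmbncz2.5ghhjuyuivvbnnihjj} together with the $L^\infty$ bound, and on the condition $2(m-1)<\varsigma$ guaranteeing the right power of $n_\varepsilon$ multiplying $\nabla n_\varepsilon$. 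Everything else is a routine, if lengthy, bookkeeping exercise that has by now appeared in several closely related papers (e.g.\ \cite{Zhengsdsd6,Winkler11215,Liuddfffff}).
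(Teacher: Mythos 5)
Your proposal is correct and follows essentially the same route as the paper: Aubin--Lions compactness applied to $n_\varepsilon^\varsigma$ (combining a spatial $W^{1,2}$ bound on $n_\varepsilon^\varsigma$ with the $\partial_t n_\varepsilon^\varsigma$ bound from Lemma \ref{lemma45630hhuujjuuyytt}), Arzel\`a--Ascoli for $c_\varepsilon$, $\nabla c_\varepsilon$, $u_\varepsilon$ via the H\"older estimates of Lemma \ref{lemma45630hhuujjuuyy}, and termwise passage to the limit in the weak identities of Definition \ref{df1}. One caution on your second paragraph: as written, applying Aubin--Lions to the pair $L^\infty(\Omega)\hookrightarrow\hookrightarrow(W^{1,2}_0(\Omega))^*$ with only the time-derivative bound gives strong convergence in a $(W^{1,2}_0(\Omega))^*$-valued Bochner space, \emph{not} strong $L^1_{loc}(\Omega\times(0,\infty))$ convergence or pointwise a.e.\ convergence; your third paragraph correctly identifies the needed ingredient, namely the $L^2_{loc}((0,\infty);W^{1,2}(\Omega))$ bound on $n_\varepsilon^\varsigma$. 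The paper obtains that bound by choosing $p=2\varsigma-m+1$ in the differential inequality \dref{cz2.ddffghhjj5} once the $L^\infty$ estimate of Proposition \ref{lemma45630hhuujjtydfrjj} is available, and then identifies the a.e.\ limit with the weak-$*$ limit $n$ by Egorov's theorem; your alternative route, combining \dref{bnmbncz2.5ghhjuyuivvbnnihjj} with the uniform $L^\infty$ bound to absorb the factor $n_\varepsilon^{2\varsigma-2m+2}$, also works (one only needs to treat the cases $2m-4\ge0$ and $2m-4<0$ separately when comparing $n_\varepsilon^{2\varsigma-2}$ to $(n_\varepsilon+\varepsilon)^{2m-4}$).
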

\begin{proof}
Firstly, Proposition \ref{lemma45630hhuujjtydfrjj} warrants that for certain $n\in L^\infty(\Omega\times(0,\infty))$, \dref{zjscz2.5297x9630222222ee} is valid.
Next, the bounds featured in Proposition \ref{lemma45630hhuujjtydfrjj}, we derive from \dref{cz2.ddffghhjj5} that there exists a positive constant $C_1:=C_1(T)$ such that
\begin{equation}
\int_{0}^T\int_{\Omega}n_{\varepsilon}^{m+p-3} |\nabla n_{\varepsilon}|^2\leq C_1
\label{fvgbhzjscz2.5297x96302222tt4455hyuhii}
 \end{equation}
 for any $p>1.$
 In particular, we choose $p := 2\zeta-m+1$, where $\varsigma> \max\{m,2(m-1)\}$ is the same as Lemma \ref{lemma45630hhuujjuuyytt}.
Therefore,  \dref{fvgbhzjscz2.5297x96302222tt4455hyuhii} asserts that  for each
$T > 0,$ $(n_{\varepsilon}^\varsigma)_{\varepsilon\in(0,1)}$ is bounded in $L^2((0, T);W^{1,2}(\Omega))$, so that
combined \dref{zjscz2.5297x9630111kkhhiioott4} with the Aubin-Lions lemma (see e.g. \cite{Temamdd41215}), we derive that
 $n_{\varepsilon}^\varsigma\rightarrow z^\varsigma$ for some nonnegative measurable $z:\Omega\times(0,\Omega)\rightarrow\mathbb{R}$.
Thus, \dref{zjscz2.5297x9630222222ee} and the Egorov theorem yields to  $z=n$ necessarily, and thereby \dref{zjscz2.5297x9630222222} holds.
Finally,
employing the same arguments as in the proof of Lemma 4.1 in \cite{Winkler11215} (see also \cite{Zhengsdsd6}), taking advantage of Proposition \ref{lemma45630hhuujjtydfrjj}, we can conclude \dref{zjscz2.5297x96302222tt}--\dref{zjscz2.5297x96302222tt4455hyuhii},  where the required equicontinuity
property used in the proof, is implied by \dref{zjscz2.5297x9630111kkhhiioott}.
%
The proof of Lemma \ref{lemma45630223} is completed.
\end{proof}

{\bf Proof of Theorem \ref{theorem3}}: The conclusion in Theorem \ref{theorem3} follows from Lemma \ref{lemma45630223} and
Proposition \ref{lemma45630hhuujjtydfrjj}.

{\bf Acknowledgement}:
This work is partially supported by  the National Natural
Science Foundation of China (No. 11601215), Shandong Provincial
Science Foundation for Outstanding Youth (No. ZR2018JL005), Shandong Provincial
Natural Science Foundation of China (No. ZR2016AQ17) and the Doctor Start-up Funding of Ludong University (No. LA2016006).

\end{document}